\numberwithin{equation}{section} \theoremstyle{plain}
\newtheorem{theorem}{Theorem}[section]
\newtheorem{proposition}[theorem]{Proposition}
\newtheorem{lemma}[theorem]{Lemma}
\newtheorem{corollary}[theorem]{Corollary}
\theoremstyle{remark}
\newtheorem{remark}[theorem]{Remark}
\newtheorem*{mainthm3-repeat}{Theorem \ref{mainthm3}}
\newtheorem*{sec3-key-prop-rpt}{Theorem \ref{sec3-key-prop}}
\renewcommand{\leq}{\leqslant}
\renewcommand{\geq}{\geqslant}
\newsavebox{\proofbox}
\savebox{\proofbox}{\begin{picture}(7,7)  \put(0,0){\framebox(7,7){}}\end{picture}}
\newcommand\F{\mathbb{F}}
\begin{document}

\title{Topological generation of special linear groups}

\author[Gerhardt]{Spencer Gerhardt}
\address{Department of Mathematics \\
University of Southern California\\
3620 S. Vermont Ave\\
Los Angeles CA 90007}
\email{sgerhard@usc.edu}

\subjclass[2010]{20G15, 20G05, 20P05}
\keywords{algebraic groups; Zariski dense subgroups; generic stabilizers; random generation of groups of Lie type}
\begin{abstract}

Let $C_1,\ldots,C_e$ be noncentral conjugacy classes of the algebraic group $G=SL_n(k)$ defined over a sufficiently large field $k$, and let $\Omega:=C_1\times \ldots \times C_e$. This paper determines necessary and sufficient conditions for the existence of a tuple $(x_1,\ldots,x_e)\in\Omega$ such that $\langle x_1,\ldots,x_e\rangle$ is Zariski dense in $G$.  As a consequence, a new result concerning generic stabilizers in linear representations of algebraic groups is proved, and existing results on random $(r,s)$-generation of finite groups of Lie type are strengthened.

\end{abstract}

\maketitle

\setcounter{tocdepth}{1}

\section{Introduction and statement of results}\label{sec1}

Let $C_1,\ldots, C_e$ be conjugacy classes of a group $G$, and $\Omega:= C_1\times\ldots \times C_e$. The  problem of specifying conditions for tuples $\omega\in\Omega$ which generate $G$, or which generate a subgroup of $G$ with special properties, arises in many different contexts. For instance, if  $C_1,\ldots ,C_e$ are conjugacy classes of $G=SL_n(\mathbb{C})$, the Deligne-Simpson problem asks for necessary and sufficient conditions for the existence of a tuple  $(x_1,\ldots,x_e)\in\Omega$ such that $x_1\cdots x_e=1$ and $\langle x_1,\ldots,x_e\rangle$ acts irreducibly on the natural module.  A  solution to this question yields information concerning monodromy groups of regular systems of differential equations on $\mathbb{C}P^1$ (see \cite{kostov5} for further details), and has been studied in numerous settings and generalizations \cite{boevey} \cite{katz} \cite{kostov1} \cite{kostov2} \cite{kostov3} \cite{kostov4} \cite{simpson}. In  \cite{boevey2} and \cite{shaw} necessary and sufficient conditions are given for the existence of such a tuple, although work on the many generalizations remains. 

In a different direction, any finitely generated group which is generated by conjugates of a single element arises as a quotient of a knot group (see  \cite{johnson} for additional details), and such groups have been studied in a variety of contexts \cite{chiodo} \cite{kim} \cite{eisenmann} \cite{osin} \cite{thom}. In the special case where $G$ is a finite simple or almost simple group, the number of conjugates of a fixed element required to generate $G$ is  examined in \cite{guralnick5} and \cite{hall}.

Finally, a good deal of recent work concerns the notion of invariable generation \cite{gelander} \cite{gelander2} \cite{gelander3} \cite{kantor2} \cite{kantor3} \cite{wiegold1} \cite{wiegold2}, in which every tuple in a product of conjugacy classes generates $G$. In particular, the question of whether linear groups such as $G=SL_n(\mathbb{Q})$ or  $G=SL_n(\mathbb{Z})$,  $n\geq3$, are  invariably generated has attracted considerable attention.

In this paper we examine another class generation problem for linear algebraic groups of the form  $G=SL_n(k)$, where $k$ is initially taken to be an uncountable algebraically closed field of arbitrary  characteristic. Once the desired result has been established, we then show the field conditions can be relaxed considerably (to arbitrary fields of characteristic zero, and algebraically closed fields of positive characteristic which are not algebraic over a  finite field). Finally, we see the solution to this problem has interesting applications to generic stabilizers in linear representations of algebraic groups, and random generation of finite groups of Lie type. 

Let us briefly describe the set up of the paper. Assume $G$ is a simple algebraic group defined over an algebraically closed field $k$ which is not algebraic over a finite field. 
Set $\Omega= C_1\times \ldots \times C_e$ and say  $(x_1,\ldots,x_e)\in\Omega$ \textit{generates $G$ topologically} if $\langle x_1,\ldots,x_e\rangle$ is Zariski dense in $G$. Since $G$ is infinitely generated, the relevant finitary notion of generation is topological. In what follows, we  determine necessary and sufficient conditions for the existence of a tuple $\omega\in\Omega$ which topologically generates $G=SL_n(k)$. In joint work with Burness and Guralnick \cite{burness} \cite{burness1}, conditions for the remaining simple algebraic groups are determined. 

  Recall a conjugacy class $C$ of $G$ is called $\it{quadratic}$ if a representative of  $C$ has a degree two minimal polynomial. Our main result is as follows. 

\begin{theorem}\label{main-thm}  Let $C_1,\ldots,C_e$ be noncentral conjugacy classes of the algebraic group $G=SL_n(k)$ where $k$ is an uncountable algebraically closed field, and $n\geq 3$. Let $\gamma_i$ be the dimension of the largest eigenspace of a representative of $C_i$ on the natural module for $G$. Then there is a tuple $\omega\in\Omega$ topologically generating $G$ if and only if the following conditions hold:

\begin{itemize} 
\item[($i$)] $\sum_{i=1}^e\gamma_i\leq n(e-1)$; 
\item[$(ii)$] it is not the case that $e=2$ and $C_1,C_2$ are quadratic. 
\end{itemize} 

\end{theorem}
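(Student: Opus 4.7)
I would prove necessity by a dimension count on a fixed-line incidence variety (for $(i)$) together with a two-idempotent algebra fact (for $(ii)$), and prove sufficiency by bounding, family by family in the Liebeck--Seitz classification of maximal closed subgroups of $\SL_n$, the dimension of tuples trapped in such a subgroup.

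\emph{Necessity.} For $(i)$, introduce the incidence variety
\begin{equation*}
I := \{(\omega, \ell) \in \Omega \times \P^{n-1} : x_i \ell = \ell \text{ for every } i\}.
\end{equation*}
Projecting to $\P^{n-1}$ (a surjection by $G$-transitivity) and using the orbit computation $\dim\{x \in C_i : x\ell = \ell\} = \dim C_i - (n - \gamma_i)$ gives a top-dimensional component of $I$ of dimension $(n-1) + \sum_i \dim C_i - en + \sum_i \gamma_i$. If $\sum_i \gamma_i > n(e-1)$ this is $\geq \dim \Omega$, and since $\P^{n-1}$ is complete the projection to $\Omega$ is closed and surjective, so every tuple admits a common fixed line. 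For $(ii)$, an element with degree-two minimal polynomial has the form $\alpha I + \beta E$ for an idempotent or square-zero $E$, and the classical structure theorem for the algebra generated by two such $E$'s (the Gelfand--Ponomarev two-projection theorem in the semisimple case) shows every simple $\langle x_1, x_2\rangle$-submodule of $k^n$ has dimension $\leq 2$, precluding topological generation when $n \geq 3$.

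\emph{Sufficiency.} It suffices to show that $\{\omega \in \Omega : \langle \omega \rangle \text{ lies in a proper closed subgroup of } G\}$ is a proper subvariety. By Liebeck--Seitz, every maximal proper closed subgroup $M \subset \SL_n$ belongs to one of finitely many families: parabolic (reducible), imprimitive (stabilizer of a direct-sum decomposition), tensor-product, classical ($\SO_n$, $\Sp_n$), subfield (positive characteristic), or an irreducible embedding of another simple algebraic group. For each such $M$, the locus
\begin{equation*}
S_M := \{\omega \in \Omega : \omega \subseteq g M g^{-1} \text{ for some } g \in G\}
\end{equation*}
has dimension at most $\dim G - \dim M + \sum_i \dim(C_i \cap M)$, so the plan is to verify the strict inequality
\begin{equation*}
\dim G - \dim M < \sum_i \bigl(\dim C_i - \dim(C_i \cap M)\bigr) \tag{$\ast$}
\end{equation*}
family by family. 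The parabolic case is immediate from $(i)$ and the incidence calculation above; the remaining families reduce to class-by-class computation of $\dim(C_i \cap M)$ in terms of the Jordan type of $C_i$ and the embedding data of $M$. Finite maximal subgroups contribute a locus of dimension at most $\dim G$, dominated by $\dim \Omega$ outside configurations already excluded by $(i)$.

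\emph{Main obstacle.} The hardest step will be verifying $(\ast)$ uniformly under the sharp bound $\sum \gamma_i \leq n(e-1)$ in boundary configurations --- small $e$ (especially $e = 2$), small $n$, and classes of high geometric multiplicity such as transvections, reflections, and elements with degree-two minimal polynomial. For $e = 2$ with both classes having degree-two minimal polynomial, the classical and imprimitive families both saturate $(\ast)$, and it is precisely hypothesis $(ii)$ that excludes this configuration, mirroring the two-idempotent argument from the necessity direction. The subfield-subgroup case in positive characteristic and the irreducible-embedding case require separate dimension arguments rooted in the representation theory of the ambient simple algebraic group.
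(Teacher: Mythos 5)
Your necessity argument for $(i)$ has a soft spot: knowing that the incidence variety $I$ has dimension at least $\dim \Omega$ does not make the (closed) projection $I\to\Omega$ surjective, because the fibres --- the sets of common fixed lines of a given tuple --- can be positive-dimensional, so $\dim\pi(I)$ may be strictly smaller than $\dim I$. The paper's Lemma 3.6 avoids this entirely with direct linear algebra: the chosen eigenspaces satisfy $\dim\bigcap_{i=1}^{e}E_{\alpha_i}\geq\sum_{i}\gamma_i-n(e-1)>0$, so \emph{every} tuple literally has a common invariant line. Your treatment of $(ii)$ via the structure of the algebra generated by two quadratic elements matches the paper's citation of Zalesski and is fine.

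The real divergence, and the real gap, is in sufficiency. You propose to verify the inequality $(\ast)$ uniformly, for every Aschbacher/Liebeck--Seitz family and every $n$, and you correctly flag the boundary configurations as ``the hardest step'' --- but that step is essentially the whole theorem and is not carried out. Concretely: (a) for the family of irreducible, tensor-indecomposable almost simple subgroups there is no uniform description of $\dim(C_i\cap M)$, so a class-by-class count over all $n$ is not feasible as stated; (b) your claim that finite maximal subgroups are ``dominated by $\dim\Omega$'' needs $\dim\Omega>\dim G$ under $(i)$ and $(ii)$, which is unproved, and there are countably many subfield subgroups up to conjugacy, so the uncountability of $k$ must enter explicitly. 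The paper performs the brute-force dimension count only in the base case $n=3$, where the positive-dimensional maximal subgroups are just $\SO_3$, the torus normalizer, and the parabolics, and then inducts on $n$: Lemma 3.5 shows the hypotheses descend to restricted classes of $\SL_{n-1}$, so generic tuples generate a subgroup containing a copy of $\SL_{n-1}$, hence one that is irreducible, primitive, and contains strongly regular elements of infinite order (all generic or open conditions by Lemmas 2.5--2.11); Lemma 2.12 then shows, via a Lie-algebra and maximal-rank argument, that no proper closed subgroup of $\SL_n$ with $n\geq 4$ has all of these properties simultaneously. That inductive mechanism is the idea your proposal is missing; without it, or without a complete execution of $(\ast)$ for every family, the sufficiency direction is not established.
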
 
Let us make a few comments about the theorem. First, the assumption that $n>2$ is made for  purposes of uniformity. Theorem 4.5 below provides the relevant necessary and sufficient conditions for $SL_2(k)$. Second, it follows from the above theorem that for any noncentral conjugacy class $C$ of $G=SL_n(k)$, $n\geq3$, it is possible to generate $G$ topologically by choosing $n$ elements from $C$. Furthermore, this bound is sharp. If $C$ is a class of transvections, it is easy to see that at least $n$ conjugates in $C$ are required to generate $G$ topologically.  
\\

The notion of a generic property underlies much of the reasoning given below, so let us fix its meaning in the present context. For any irreducible variety $X$ defined over an algebraically closed field $k$, say a subset of $X$ is \textit{generic} if it contains the complement of a countable union of proper closed subvarieties of $X$.  Note over any uncountable algebraically closed field a generic subset has $k$-points.

In the statement of Theorem 1.1 it is assumed that $G=SL_n(k)$ is defined over an uncountable algebraically closed field $k$. It is evident some condition on $k$ is required, as $G$ is locally finite when $k$ is algebraic over a finite field. However once Theorem 1.1 has been established, it is possible to use this result to substantially relax the assumptions placed on $k$. Furthermore, the existence of a single tuple $\omega\in\Omega$ topologically generating $G$ can be used to establish that some larger subset of tuples in $\Omega$ generate $G$ topologically.

\begin{theorem} Let $C_1,\ldots, C_e$ be noncentral conjugacy classes of $G=SL_n(k)$, $n\geq3$, where $k$ is any field. Assume  
$\sum_{i=1}^e\gamma_i\leq n(e-1)$, and it is not the case that $e=2$ and $C_1,C_2$ are quadratic.  
\begin{itemize} 
\item[$(i)$] If $k$ is an uncountable algebraically closed field, a generic subset of tuples in $\Omega$ generate $G$ topologically.
\item[$(ii)$] If $k$ is any field of characteristic zero, a nonempty open subset of tuples in $\Omega$ generate $G$ topologically.
\item[$(iii)$] If $k$ is an algebraically closed field that is not algebraic over a finite field, a dense subset of tuples in $\Omega$ generate $G$ topologically.
\end{itemize} 
\end{theorem} 

Similarly if $G$ is any simply connected simple algebraic group defined over a field $k$, and $\omega\in\Omega$ is a tuple that topologically generates $G$, natural analogues to conditions $(i)-(iii)$  exist  (see Theorem 6.2 below). 
\newline

 Now let us briefly describe the proof strategy for the main theorem. To begin, pick noncentral conjugacy classes $C_1,\ldots,C_e$ of $G=SL_n(k)$. Let $M$ be a closed subgroup of $G$, and 

$$Y_M:=\overline{\{(x_1,\ldots,x_e)\in\Omega\mid \langle{x_1,\ldots,x_e}\rangle\subset M^g \text{, for some } g\in G\}}.$$
Up to conjugacy, there exist finitely many maximal positive-dimensional closed subgroups of $G$ (see Corollary 3 of \cite{liebeck6}). Call these subgroups\newline $M_1,\ldots,M_t$. In positive characteristic, a \emph{subfield subgroup} of $G$ is a finite subgroup of the form $G(q)$ with $q=p^a$  where $p$ is the characteristic of the underlying field $k$. Note each subfield subgroup $G(q)$ is a closed subgroup of $G$, and every proper closed subgroup of $G$ is contained in either some subfield subgroup, or some positive dimensional maximal subgroup of $G$ (see Lemma 3.1 of \cite{guralnick}).

From now on we will say that generic tuples in $\Omega$ possess a given property if a generic subset of $\Omega$ has the property. The overall strategy is as follows. Suppose we can show that generic tuples in $\Omega$ generate an infinite group, and that generic tuples in $\Omega$ are not contained in $\bigcup_{i=1}^t Y_{M_i}$.  Then generic tuples will generate a subgroup of $G$ contained in no conjugate of a maximal positive-dimensional closed subgroup, or a conjugate of any subfield subgroup. In this case, generic tuples in $\Omega$  topologically generate $G$. 

The argument proceeds by induction on the dimension of the natural module. In the base case $n=3$, the maximal subgroup structure of $G$ is completely understood. Here, for any closed subgroup $M$ of $G$, Lemma 2.1 provides the useful bound 
$$\dim Y_M\leq \dim  \prod_{j=1}^e \ (C_j\cap M)+ \dim   G/M.$$
Then by computing the dimensions of varieties, we show that 
$$ \dim  Y_M\leq  \dim \prod_{j=1}^e\ (C_j\cap M)+ \dim  G/M < \dim   \Omega $$ 
for each maximal closed subgroup, and each subfield subgroup $M$ of $G$. It follows that generic tuples in $\Omega$ generate an infinite group not contained in $\bigcup_{i=1}^t Y_{M_i}$.

\par 
For the inductive argument, slightly more indirect reasoning is required. Using the inductive hypothesis, we infer that some tuple $\omega\in\Omega$ topologically generates a large rank subgroup of $G$. Making use of this fact, we then show that generic tuples in $\Omega$
generate a group having a list of properties that no subfield subgroup nor positive-dimensional maximal subgroup can share. In particular, we show that generic tuples in $\Omega$ 
act irreducibly and primitively on the natural module, and contain ``strongly regular" elements of infinite order. (For a given maximal torus $T$ of $G$, an element of $G$ is said to be \textit{strongly regular} if it has distinct eigenvalues on the root spaces of $Lie(G)$ with respect to $T$). We then show this implies that generic tuples in $\Omega$ topologically generate $G$. 
\ \\

Finally we turn to applications of the main theorem. The first concerns linear representations of algebraic groups. In what follows let $G$ be a simply connected simple algebraic group defined over an algebraically closed field $k$ of characteristic $p \geq 0$. Unless otherwise stated,  let $V$ be an irreducible finite dimensional rational $kG$-module.  Recall for any $x\in V$ the point stabilizer of $x$ in $G$ is written $G_x:=\{g\in G \mid gx=x\}$. A subgroup $H$ is said to be a $\textit{generic stabilizer}$ of $G$ on $V$ if there exists a nonempty open subset $V_0\subset V$ such that $G_x$ is a conjugate of $H$ for every $x\in V_0$. 

In characteristic zero, it is a well-known result of Richardson \cite{richardson} that a generic stabilizer exists whenever $V$ is a smooth affine variety (in fact, this result holds when $G$ is reductive). In positive characteristic this fails, although in many cases of interest generic stabilizers do exist, for instance when $V$ is irreducible \cite{guralnick2}.

The structure of generic stabilizers, and in particular the question of whether a generic stabilizer is finite or trivial, arises frequently in the context of invariant theory.  In characteristic zero, the structure theory of generic stabilizers has been carefully investigated. For instance, \`Ela\v svili \cite{elasvili} and Popov  \cite{popov1} classify when a generic stabilizer is finite or trivial.  A well-known related result states that a generic stabilizer of $G$  
is nontrivial if its algebra of invariant polynomials $k[V]^G$ is free (see Theorem 8.8 of \cite{popov3}). For semisimple algebraic groups, the question of when a generic stabilizer is trivial is studied in \cite{elasvili}, \cite{elasvili2} and \cite{popov2}.

 Much recent work concerns the structure of generic stabilizers in arbitrary characteristic. In this context, Guralnick and Lawther  \cite{guralnick2} recently established that a generic stabilizer is finite if and only if $\dim V> \dim G$.  
 
 In addition, for applications to Galois cohomology and essential dimension, it is often necessary to consider a generic stabilizer as a group scheme \cite{brosnan} \cite{garibaldi4} \cite{karpenko} \cite{lot}. Further recent work  \cite{garibaldi2} \cite{garibaldi3} shows if $\dim  V> \dim  G$, then in most cases a generic stabilizer is trivial as a group scheme. 

As these results relate to the bound given in  Theorem 1.3 below, let us describe this work in slightly greater detail. If $G$ is an algebraic group defined over a field $k$ and $\rho: G\rightarrow GL(V)$ is a representation, $G$ is said to act \textit{generically freely} on $V$ if there exists a dense open subset $U$ of $V$ such that for every extension $K$ of $k$ and every $u\in U(K)$, the stabilizer $G_u$ (a closed subgroup-scheme of $G\times K$) is the group scheme 1. In \cite{garibaldi2} and \cite{garibaldi3} it is shown if $\rho:G\rightarrow GL(V)$ is a faithful, irreducible representation of a simple algebraic group defined over any algebraically closed field $k$, then a generic stabilizer acts generically freely on $V$ if and only if $\dim V> \dim G$ and the pair $(G,V)$ does not appear on a short list of exceptional cases (see Tables 4 and 5 in  \cite{garibaldi}). 

For applications, it is often necessary to know not just whether $G_v$ is infinitesimal (i.e. 1 as a group scheme), but whether $G_v$ is trivial as a group scheme. To determine this, in \cite{garibaldi} it is established when a generic Lie algebra stabilizer $Lie(G)_v$ is as small as possible (i.e. when $Lie(G)_v= \text{ker } d\rho$). 

More precisely, let
$$V^{G}=\{v\in V|\ gv=v \text{ for all } g\in G\}$$ 
and for any Lie subalgebra  $\mathfrak{s}\subseteq \mathfrak{g}$, let 
$$V^{\mathfrak{s}}=\{v\in V|\ d\rho(x)v=0 \text{ for all } x\in\mathfrak{s}\}.$$ 
Furthermore let $\rho: G\rightarrow  GL(V)$ be a representation of $G$ where $V$ has a $G$-subquotient $X$ with  $X^{[\mathfrak{g},\mathfrak{g}]}$ $= 0$. In \cite{garibaldi} new bounds $b(G)$ are determined such that if  $\dim  X > b(G)$, then for generic $v\in V$, $Lie(G)_v = \text{ker } d\rho$.

\par 
In Section 5, we use Theorem 1.1 to establish related bounds for algebraic groups, as opposed to their Lie algebras. Let $G=SL_n(k)$ where $k$ is an algebraically closed field of arbitrary characteristic, and let $V$ be a finite dimensional rational $kG$-module such that $V^G=0$. (Note in particular we are not assuming that $V$ is irreducible). In this context, we show a generic stabilizer $G_v$ of $G$ is trivial as a group whenever $\dim V>\alpha$. Interestingly this bound $\alpha$ is the same one obtained for the corresponding Lie algebra $\mathfrak{sl}_n(k)$ in \cite{garibaldi}.  

Furthermore, our result relates to a generic stabilizer being trivial as a group scheme. To establish this fact it suffices to show  the group stabilizer is generically trivial, and the generic Lie algebra annihilator is zero. (In characteristic zero, it suffices to show the group stabilizer is generically trivial.) Our result is as follows.

\begin{theorem}\label{gen-stab}
Let $G=SL_n(k)$, where $n\geq3$ and $k$ is an algebraically closed field. Assume $V$ is a finite dimensional rational $kG$-module such that $V^G=0$. If $\dim V> \frac{9}{4} n^2$, then a generic stabilizer is trivial. 
\end{theorem} 

The bound in Theorem 1.3 is close to best possible. For instance, if $V = {\rm Sym}^2(W) \oplus {\rm Sym}^2(W)$, where $W$ is the natural module for $G$, then $\dim V = n(n+1)$ and a generic stabilizer (in characteristic $p\ne 2$)
is a nontrivial elementary abelian group. The generic stabilizer is the generic intersection of the
individual generic stabilizers, which are  centralizers of graph automorphisms of order 2  inverting a maximal torus of $G$. It follows the generic stabilizer is a nontrivial elementary abelian 2-group
contained in the maximal torus $T$ of $G$ inverted by both of these involutions (see \cite{burness2}, Lemma 3.6). 

Note in the above example that the module $V$ is reducible. It appears that examples where $\dim V> n^2$ and a generic stabilizer is nontrivial arise only when $V$ is reducible. However when $V$ is irreducible, there are still instances where $\dim V < \frac{9}{4} n^2$ and a generic stabilizer is nontrivial. For example, if $G=SL_9(k)$ and $V=\wedge^3(W)$ where $W$ the natural module for $G$. Then $\dim V=84\leq \frac{9}{4}\cdot 9^2$, and a generic stabilizer (in characteristic $p\ne 3$) is a nontrivial elementary abelian group. See Table 1 of \cite{guralnick2} for this and other examples. 
\\
\\
 
 A second application of our main theorem concerns a classical group-theoretic problem: the random generation of finite simple groups. The following two questions date back to the 19th century: 
\begin{itemize} 
\item[$(i)$] does the probability that two random elements of $\mathrm{Alt}_n$ generate $\mathrm{Alt}_n$ tend to 1 as $n\rightarrow\infty?$

\item[$(ii)$] which finite simple groups appear as quotients of the modular group $PSL_2(\mathbb{Z})$? 
\end{itemize} 
The first question was posed by Netto in 1892 and answered affirmatively by Dixon \cite{dixon} in 1969. In the same paper, Dixon  conjectures that for \textit{any} finite simple group $H$, the probability that two random elements of $H$ generate $H$ tends to 1 as the size of $H$ tends to infinity. After some effort, this conjecture was proven by Kantor-Lubotzky \cite{kantor} and Liebeck-Shalev \cite{liebeck} in the 1990s using probabilistic methods. 
  \par The positive solution of Dixon's conjecture led to many more fine-grained questions concerning the random generation of finite simple groups. A natural refinement of the above question concerns  whether two random elements of prime orders $r$ and $s$ generate $H$ as $|H|\rightarrow\infty$. The property that 
  two elements of orders $r$ and $s$ generate $H$ with probability tending to 1 as $|H|$ tends to infinity
  is called $\emph{random $(r,s)$-generation}$.  Perhaps surprisingly given the strength of the statement, this property has also been found to hold in many cases \cite{liebeck2}, although counterexamples exist \cite{liebeck3}, and some questions remain. In particular, by work of Liebeck and Shalev \cite{liebeck2} it is known that finite simple classical groups have random $(r,s)$-generation when $(r,s)\neq(2,2)$ and the rank of the group is large enough (depending on the primes $r$ and $s$).
  \par 
 The weaker question of determining whether \textit{any} elements of orders $r$ and $s$, and in particular whether any elements of orders 2 and 3, generate a finite simple group is also a classical group-theoretic problem. This property is called \textit{$(r,s)$-generation}. Its geometric background and long history in the literature are discussed in \cite{liebeck3}. 
 
 Although it is not immediately obvious, question $(ii)$ listed above concerns the $(2,3)$-generation of finite simple groups. As $PSL_2(\mathbb{Z})$ is isomorphic to the free product $C_2\ast C_3$, the question may be rephrased in the following form: which finite simple groups are $(2,3)$-generated? 
  
  Much progress on this problem has been made in recent years. In \cite{lubeck2}, it is shown all finite simple exceptional groups (except Suzuki groups) are $(2,3)$-generated. Suzuki groups are obvious exceptions because they do not contain any elements of order $3$. For the classical groups, recent advances have been made by proving stronger random $(2,3)$-generation results. In \cite{liebeck3} it is shown  that all finite simple classical groups not of the form $PSp_4(q)$ have random $(2,3)$-generation. Consequently, aside from $PSp_4(q)$, all but finitely many finite simple classical groups appear as quotients of $PSL_2(\mathbb{Z})$. 
  
  For groups of type $PSp_4(q)$, the random $(2,3)$-generation results are more nuanced. First, groups of the form $PSp_4(p^f)$ with $p \in \{2,3\}$ are not $(2,3)$-generated for any $f$. For $PSp_4(p^f)$, with $p\geq5$, the probability that two random elements of orders 2 and 3 generate $PSp_4(p^f)$ tends to $\frac{1}{2}$ as the size of the group goes to infinity \cite{liebeck3}.  In recent work \cite{guralnick3} it is shown that all exceptional groups of Lie type (except Suzuki groups) also have random $(2,3)$-generation. 

\par Using random $(2,3)$-generation as a guide, one might expect (modulo a few small rank exceptions) that random $(r,s)$-generation holds for all finite simple groups, with $r,s$ independent of the rank of the group. In Section 5, we use topological generation to prove this fact for linear and unitary groups, hence strengthening results in \cite{liebeck2}. In work with collaborators \cite{burness} \cite{burness1}, this property is established for the remaining finite simple groups. 

In what follows, the rank of a finite unitary group $G(q)$ is defined to be its untwisted rank, which coincides with the rank of the ambient simple algebraic group $G$. Furthermore, the characteristic of $\mathbb{F}_q$ is assumed to be  fixed. 

\begin{theorem}\label{finite}  

Fix a positive integer $n$, and let $r,s$ be primes with $(r,s)\neq (2,2)$. Let $G(q)$ be a finite simple linear or unitary group of fixed rank $n$ over the finite field $\F_q$ and assume $G(q)$  contains elements of orders $r$ and $s$. Let $x$ and $y$ be random elements of orders $r$ and $s$ in $G(q)$. Then the probability that $x$ and $y$ generate $G(q)$ tends to $1$ as $q \rightarrow \infty$.
\end{theorem}

\newpage

\textsc{Acknowledgments.}  This research was partially supported by NSF grant DMS-1302886. The author would like to thank Robert Guralnick for many helpful comments and guidance during the development of this paper. In addition, thanks are due to Gunter Malle for numerous valuable remarks on an earlier draft of this article, and to the anonymous referee whose careful reading led to several organizational changes and improvements in the text.

\section{Preliminary results}\label{sec 2}

Before proving Theorem 1.1, it will be helpful to establish some general facts about algebraic groups required in our arguments. This background material will be stated in slightly greater generality than necessary, so that it may be applied in future work. 

In this section (unless otherwise stated) let $G$ be a simply connected simple algebraic group defined over an algebraically closed field of characteristic $p\geq 0$. When $G$ is a classical group, let $V$ be the natural module for $G$. Finally, let $C_1,\ldots,C_e$ be noncentral conjugacy classes of $G$, and $\Omega:=C_1\times\ldots\times C_e$. Say that a tuple $(x_1,\ldots,x_e)\in G^e$ has a given group property if the closure of $\langle x_1,\ldots,x_e\rangle$ has that property. 

For any closed subgroup $M$ of $G$, let:

\begin{itemize} 
\item[-] $\Delta:= \prod_{j=1}^e (C_j\cap M)$
\item[-] $X:=
\bigcup_{g\in G}\ (M^g\times \ldots\times M^g)\subset G^e$
\item[-] $\varphi: G\times \Delta \rightarrow X$ be the map $(g, (x_1,\ldots,x_e))\mapsto (x_1^g,\ldots,x_e^g)$ \item[-] $Y_M:=\overline{\{(x_1,\ldots,x_e)\in\Omega\mid \langle{x_1,\ldots,x_e}\rangle\subset M^g \text{, for some } g\in G\}}$

\end{itemize}

Note for any closed subgroup $M$ of $G$,  $(x_1^g,\ldots, x_e^g)\in im(\varphi)$ if and only if $(x_1,\ldots,x_e)\in\Omega$ and $\langle x_1,\ldots,x_e\rangle\subset M^g$ for some $g\in G$. In particular,  $\overline{im(\varphi)}=Y_M$. 

There exist finitely many conjugacy classes of maximal positive-dimensional closed subgroups of $G$ (see Corollary 3 in  \cite{liebeck6}). Call these subgroups $M_1,\ldots,M_t$. As discussed in Section 1, the overall strategy is to prove that generic tuples in $\Omega$ topologically generate an infinite group and that generic tuples in $\Omega$ are not contained in $\bigcup_{i=1}^t Y_{M_i}$. The next result reduces proving the base case $n=3$ of the main theorem to performing calculations on the dimensions of explicit varieties. 

\begin{lemma} Let $M$ be a closed subgroup of $G$. We have $$\dim   Y_M\leq  \dim  \Delta  + \dim  G/M.$$ 
 \end{lemma}

\begin{proof}
To begin, note any $(x_1,\ldots,x_e)^g\in im(\varphi)$ will be in the same orbit as $(x_1,\ldots,x_e)\in\Delta$, and hence their fibers will have the same dimension. So without loss of generality, assume $(x_1,\ldots,x_e)\in\Delta$ and consider the map $\varphi': M\times \Delta\rightarrow \Delta$,  
$(g, (x_1,\ldots,x_e))\mapsto (x_1^g,\ldots,x_e^g)$. Clearly $im(\varphi')=\Delta$, and hence every fiber of $\varphi$ has dimension at least $\dim M$. By the fiber theorem (see Corollary 4 of EGA III \cite{grothendieck}), $\dim  Y_M\leq \dim \Delta +\dim  G/M$.\end{proof}

\begin{remark}
Over an uncountable algebraically closed field, the above lemma reduces proving generic tuples in $\Omega$ topologically generate $G$ to establishing that $\dim \Omega> \dim  \Delta +\dim G/M$, for each maximal positive-dimensional closed subgroup $M$ of $G$.
\end{remark}

The following is a version of Burnside's Lemma for algebraic groups, which allows us to recast Lemma 2.1 in a more geometric light. 

\begin{lemma} Consider the action of $G$ on the coset variety $G/M$. Then for $x\in M$, $$ \dim   G/M - \dim\  (G/M)^x= \dim   C - \dim\ (C\cap M)$$ where $(G/M)^x$ is the set of fixed points of $x$, and $C$ is the conjugacy class of $x$ in $G$. 
\end{lemma} 
\begin{proof} 
 This is Proposition 1.14 in \cite{lawther}. 
 \end{proof}
 
This immediately yields the following useful bound on fixed point spaces.

\begin{lemma}
 Write $C_i=x_i^G$ for $i=1,\ldots, e$. If $\Omega\subset Y_M$, then
 
 $$\sum_{j=1}^e\dim\  (G/M)^{x_j}\geq (e-1)\cdot  \dim  G/M.$$  
 \end{lemma} 
  \begin{proof} 
 Assume $\Omega\subset Y_M$. By Lemma 2.1, we have $$\dim  \Omega\leq \sum_{j=1}^e \dim\  (C_j\cap M) + \dim  G/M.$$ Summing from 1 to $e$, Lemma 2.3 yields  
 $$ \dim\  \Omega  =  \sum_{j=1}^e \dim\  G/M- \sum_{j=1}^e \dim\  (G/M)^{x_j}+ \sum_{j=1}^e \dim\
 (C_j\cap M),$$ and by combining these facts we deduce that $$\sum_{j=1}^e \dim\  (G/M)^{x_j}\geq (\sum_{j=1}^e \dim  G/M) -\dim  G/M.$$
 \end{proof}

For the inductive argument required in the proof of Theorem 1.1, we will use the inductive hypothesis to infer the existence of a tuple $\omega\in\Omega$ which topologically generates a large rank subgroup of $SL_n(k)$. We will then use this fact to show that generic tuples in $\Omega$ generate a group having a list of properties that no subfield subgroup nor positive-dimensional maximal subgroup can share. 

Typically these arguments proceed in two stages. The first is to prove the existence of a tuple $\omega\in\Omega$ having some desirable property, and the second is to show that this property is either an open or generic condition. 

We will make use of several facts from the literature. Let $V$ be a rational $kG$-module. Furthermore let  $\mathcal{R}_{d,e}(V)$ be the set of tuples in $G^e$ fixing a given \textit{d}-dimensional subspace of $V$,  $\mathcal{I}_{d,e}(V)$ be the set of tuples in $G^e$ that fix no \textit{d}-dimensional subspace of $V$, and $\mathcal{I}_e(V)$ be the set of tuples in $G^e$ that act irreducibly on $V$. The following lemma is well-known, and is used to show an open subset of tuples in $\Omega$ generate a group acting irreducibly on the natural module. 

\begin{lemma}
Let $V$ be a rational $kG$-module. Then
\begin{itemize} 
\item[$(i)$] $\mathcal{R}_{d,e}(V)$ is a closed subvariety of $G^e$.
\item[$(ii)$] $\mathcal{I}_{d,e}(V)$ is an open subvariety of $G^e$.
\item[$(iii)$] $\mathcal{I}_e(V)$ is an open subvariety of $G^e$. 

\end{itemize}
\end{lemma}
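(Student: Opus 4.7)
The plan is to derive (i) from the completeness of the Grassmannian variety $\Grass_d(V)$ and then deduce (ii) and (iii) by formal manipulations. The natural vehicle is the incidence variety
$$Z_{d,e} := \{(x_1, \ldots, x_e, W) \in G^e \times \Grass_d(V) \mid x_j W = W \text{ for all } 1 \leq j \leq e\},$$
and my goal would be to identify $\mathcal{R}_{d,e}(V)$ with the image of $Z_{d,e}$ under the projection to $G^e$.

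First I would verify that the condition $x_j W \subseteq W$ is Zariski-closed on pairs $(x_j, W) \in G \times \Grass_d(V)$. On each standard affine chart of $\Grass_d(V)$ a $d$-plane $W$ is presented as the row span of an explicit $d \times \dim V$ matrix $B$ whose entries are polynomial in the chart coordinates; the relation $xW \subseteq W$ then unfolds into the vanishing of the components of $xB$ modulo the row span of $B$, each of which is polynomial in the matrix entries of $x$ and the coordinates on $B$. Since each $x_j$ is invertible, containment upgrades automatically to equality, so $Z_{d,e}$ is closed in $G^e \times \Grass_d(V)$. Because $\Grass_d(V)$ is projective, the projection $\pi \colon G^e \times \Grass_d(V) \to G^e$ is a closed morphism, and $\pi(Z_{d,e}) = \mathcal{R}_{d,e}(V)$ by construction, which will establish (i).

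Part (ii) then follows at once from $\mathcal{I}_{d,e}(V) = G^e \setminus \mathcal{R}_{d,e}(V)$. For (iii), I would observe that a tuple in $G^e$ acts irreducibly on $V$ precisely when it stabilizes no proper nonzero subspace of $V$, equivalently no $d$-dimensional subspace for any $1 \leq d \leq \dim V - 1$; hence
$$\mathcal{I}_e(V) = \bigcap_{d=1}^{\dim V - 1} \mathcal{I}_{d,e}(V),$$
a finite intersection of open subvarieties and so itself open. I do not anticipate any real obstacle; the only step requiring minor care is the check that the tautological fix-condition carves out a closed subscheme of $G^e \times \Grass_d(V)$, and this is routine via either the Plücker or the standard-chart description of the Grassmannian.
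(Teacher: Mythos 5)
Your argument is correct and is, in substance, the standard proof: the paper disposes of (i) and (ii) by citing Lemma 11.1 of Guralnick--Tiep \cite{gural2}, and the content of that lemma is exactly your incidence-variety argument --- $Z_{d,e}$ is closed in $G^e\times\Grass_d(V)$ because the fix-condition is polynomial on each chart (or in Pl\"ucker coordinates), and the projection to $G^e$ is closed because the Grassmannian is complete. This is also the same mechanism the paper itself deploys explicitly in Lemma 2.4 and Theorem 2.9 with $G/M$ in place of $\Grass_d(V)$, so you have essentially unpacked the cited reference rather than taken a different route. One small but worthwhile point of comparison for part (iii): you write $\mathcal{I}_e(V)=\bigcap_{d=1}^{\dim V-1}\mathcal{I}_{d,e}(V)$, which is the correct set identity (a tuple acts irreducibly iff it fixes no proper nonzero subspace of any dimension), whereas the paper's proof asserts $\bigcup_{1\leq d\leq n}\mathcal{I}_{d,e}(V)=\mathcal{I}_e(V)$; the union is not equal to $\mathcal{I}_e(V)$ (a tuple can fix a line but no plane), so the paper's displayed identity is a slip, even though openness would follow from either a finite union or a finite intersection of open sets. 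Your version is the one that should stand. The only cosmetic caveat is that since $V$ is an arbitrary rational $kG$-module, the matrices acting on $V$ are $\rho(x_j)$ for the representation $\rho\colon G\to GL(V)$, but as $\rho$ is a morphism of varieties all your polynomial conditions remain polynomial in the coordinates on $G^e$, so nothing breaks.
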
 
\begin{proof}
 Parts $(i)$ and $(ii)$ follow from Lemma 11.1 in \cite{guralnick4}. For part $(iii)$, assume $\dim  V=n$. Then $\cup_{1\leq d\leq n}\mathcal{I}_{d,e}(V)= \mathcal{I}_e(V)$.\end{proof}

An extension of Lemma 2.5 proved in \cite{breuillard} will also be useful for establishing properties of topological generation. Let $q$ be a power of a prime $p$. Recall for a given Frobenius endomorphism $F_q$ of $G$, that the fixed point group $G^{F_q}=G(q)$ is a finite group of Lie type over $\mathbb{F}_q$.

\begin{lemma} 
 There exists a finite collection $V_1,\ldots, V_t$ of irreducible finite-dimensional $kG$-modules with the property that if a proper closed subgroup of $G$ acts irreducibly on $V_i$ for each $i$, then it is finite and conjugate to $G(q)$ for some $q$. In particular, if $char(k) = 0$, there are no such subgroups.
\end{lemma} 

\begin{proof} 
This is Lemma 4.2 in \cite{breuillard}.
\end{proof} 

\begin{remark}
By Lemma 2.5 $(iii)$ acting irreducibly on a single (and hence any finite collection) of rational $kG$-modules is an open condition, so Lemma 2.6 shows the set of tuples in $G^e$ topologically generating a group containing a conjugate of some fixed subfield subgroup $G(q_0)$ of $G$ is open in $G^e$. In $char(k) = 0$, Lemmas 2.5 and 2.6 show that topologically generating $G$ is an open condition. 
\end{remark} 

\par 
To establish the remaining generic properties of $\Omega$ it will be necessary to move between properties holding generically on a simple algebraic group $G$ and properties holding generically on $G^e$. In what follows, let $F_e$ refer to the free group of rank $e$. 

\begin{lemma}
 For any nontrivial word $w\in F_e$, the word map $\varphi_w:G^e\rightarrow G$ which sends  $(x_1,\ldots,x_e)\mapsto w(x_1,\ldots,x_e)$ is dominant. In particular,  $Z \subseteq G$ is a generic subset of $G$ if any only if  $$\{(x_1,\ldots,x_e)\in G^e\ |\ w(x_1,\ldots,x_e)\in Z\}$$ is a generic subset of $G^e$. 
\end{lemma}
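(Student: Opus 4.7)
My plan is to first establish dominance of $\varphi_w$ via Borel's theorem, then deduce the stated equivalence by pulling proper closed subvarieties back along $\varphi_w$; the converse direction requires a fiber-dimension argument that exploits uncountability of $k$.

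Dominance of $\varphi_w$ is Borel's theorem on word maps (A.~Borel, \emph{On free subgroups of semisimple groups}, L'Enseignement Math., 1983): for any nontrivial $w\in F_e$ and any connected semisimple $G$, the map $\varphi_w:G^e\to G$ is dominant. The standard proof substitutes $x_i=\exp(t\xi_i)$ and analyzes the leading term in $t$ to show the differential of $\varphi_w$ at $(1,\ldots,1)$ surjects onto $\g$, using the $G$-equivariance of $d\varphi_w$ under simultaneous adjoint conjugation together with semisimplicity of $\g$.

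For the forward direction, if $Z\subset G$ is generic, write $Z^c\subset\bigcup_n V_n$ with each $V_n\subsetneq G$ proper closed. Then $\varphi_w^{-1}(Z^c)\subset\bigcup_n\varphi_w^{-1}(V_n)$; each preimage is closed, and is proper by dominance (otherwise $\overline{\varphi_w(G^e)}\subset V_n\subsetneq G$), so $\varphi_w^{-1}(Z)$ is generic.

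For the converse, assume $\varphi_w^{-1}(Z)$ is generic, so $\varphi_w^{-1}(Z^c)\subset\bigcup_m W_m$ with each $W_m\subsetneq G^e$ irreducible closed. By generic flatness there is a dense open $U\subset G$ over which the fibers $F_y=\varphi_w^{-1}(y)$ are of pure dimension $d:=\dim G^e-\dim G$. For $y\in U\cap Z^c$, $F_y\subset\bigcup_m W_m$; since $k$ is uncountable, no irreducible variety is a countable union of proper closed subvarieties, so each $d$-dimensional component of $F_y$ lies in some single $W_m$. For each $m$, either $\overline{\varphi_w(W_m)}=:V_m$ is already a proper closed subvariety of $G$, or $\varphi_w|_{W_m}$ is dominant with generic fiber dimension strictly less than $d$ (since $\dim W_m<\dim G^e$), in which case upper semicontinuity of fiber dimension confines the locus of $y$ with fiber dimension $\geq d$ to a proper closed $V_m\subsetneq G$. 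In either case, the $y\in G$ for which $F_y$ has a $d$-dimensional component inside $W_m$ lie in $V_m$, so $Z^c\subset(G\setminus U)\cup\bigcup_m V_m$ is contained in a countable union of proper closed subvarieties, proving $Z$ generic. The main obstacle is this converse direction, which requires simultaneously invoking generic flatness, upper semicontinuity of fiber dimension on each $W_m$, and the Baire-type uncountability property of irreducible varieties; the dominance and forward direction are essentially immediate.
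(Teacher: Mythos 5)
Your proof is correct, but it takes a genuinely different route from the paper: the paper disposes of this lemma with a one-line citation (the dominance of word maps is Borel's theorem, and the equivalence of genericity under pullback is quoted as Proposition 2.5 of Breuillard--Green--Guralnick--Tao \cite{breu}), whereas you reprove the whole statement from scratch. Your forward direction and the dominance step match the standard arguments exactly. The converse is where your proposal adds real content: reducing to fibers of pure dimension $d$ over a dense open $U$, using the uncountability of $k$ to place each $d$-dimensional component of $F_y$ inside a single $W_m$, and then bounding the image of the ``large fiber'' locus of $\varphi_w|_{W_m}$. One small gloss there: upper semicontinuity of fiber dimension is a statement on the \emph{source} (Chevalley), so the locus $\{y : \dim(\varphi_w|_{W_m})^{-1}(y)\geq d\}$ is a priori only constructible, not closed; what saves you is that it cannot be dense (a dense such locus would force $\dim W_m\geq \dim G + d = \dim G^e$), so a short Noetherian induction confines it to a proper closed subset of $G$, which is all you use. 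With that step made explicit, your argument is a complete, self-contained proof of a fact the paper treats as a black box; the trade-off is length versus transparency, and your version has the merit of making visible exactly where uncountability of $k$ enters (only in the converse direction).
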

\begin{proof} 
This is a result of Borel, and appears as Proposition 2.5 in  \cite{breuillard}. 
\end{proof} 

Let us say a tuple $(x_1,\ldots,x_e)\in G^e$ 
possesses a property of an element of $G$ (has infinite order, is regular semisimple, etc.) if there is a nontrivial word $w\in F_e$ such that $w(x_1,\ldots, x_e)$ has that property. 

Perhaps surprisingly, some effort is required to show that generic tuples in $\Omega$ generate an infinite group (and therefore are not contained in a subfield subgroup of $G$). The reason for this difficulty is that the property of having infinite order, or of generating an infinite subgroup, is not an open condition. However if we assume $G$ is defined over an uncountable algebraically closed field, then having infinite order is a nonempty generic condition. 

\begin{lemma} 
Let $k$ be an uncountable algebraically closed field. Then generic tuples in $G^e$ have infinite order. Furthermore if some $\omega\in\Omega$ has infinite order, generic tuples in $\Omega$ share this property.  
\end{lemma}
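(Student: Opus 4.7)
The plan rests on two tools: Lemma 2.10 on the transfer of generic properties across word maps, and the standard fact that an irreducible variety over an uncountable algebraically closed field cannot be written as a countable union of proper closed subvarieties. No deeper structure theory is needed.

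First, I will show that the set $Z$ of infinite order elements in $G$ is a generic subset of $G$. For each positive integer $n$, let $T_n := \{g \in G : g^n = 1\}$, which is a closed subvariety of $G$. Since $G$ is simple, it contains a maximal torus $T \cong (k^\times)^r$ with $r \geq 1$; as $k^\times$ is uncountable, $T \cap T_n$ is finite, so $T_n$ is a proper subvariety of $G$. Hence the torsion set $\bigcup_{n \geq 1} T_n$ is a countable union of proper closed subvarieties of $G$, and its complement $Z$ is generic. Applying Lemma 2.10 to the nontrivial word $w = x_1$ shows that $\{\omega \in G^e : x_1(\omega) \in Z\}$ is a generic subset of $G^e$; every tuple in this set has infinite order, witnessed by $w = x_1$.

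For the second claim, suppose $\omega_0 \in \Omega$ has infinite order, and fix $w \in F_e$ with $w(\omega_0)$ of infinite order. I consider the word morphism $\varphi_w : \Omega \to G$. Since $w(\omega_0) \notin T_n$ for any $n$, each closed preimage $\varphi_w^{-1}(T_n)$ is a proper closed subvariety of the irreducible variety $\Omega$. The union $\bigcup_{n \geq 1} \varphi_w^{-1}(T_n)$ is therefore meagre in $\Omega$, and its complement is a generic subset of $\Omega$ consisting of tuples whose $w$-value has infinite order; each such tuple has infinite order by definition.

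The main subtlety is that having infinite order is not a Zariski-open condition on $G^e$ or $\Omega$, so the argument genuinely requires the countable-union definition of generic together with the uncountability of $k$; once that framework is in place, both claims reduce to the meagreness of the torsion locus and (for the first claim) the transfer principle of Lemma 2.10.
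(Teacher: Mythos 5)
Your proof is correct and follows essentially the same route as the paper: show the torsion locus $\bigcup_n\{g:g^n=1\}$ is a countable union of proper closed subvarieties, transfer to $G^e$ via Lemma 2.10, and for the second claim pull back the torsion sets along the witnessing word map $\varphi_w$ restricted to the irreducible variety $\Omega$. You additionally verify (via the $n$-torsion of a maximal torus) that each $T_n$ is proper in $G$, a point the paper leaves implicit, but the argument is the same.
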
 
\begin{proof} 
The set $\Theta_n=\{x\in G \ |\ x^n=1\}$ of elements of order dividing $n$ is closed in $G$. Over an uncountable field, $(\bigcup_{n\geq1} \Theta_n)^c$ is a generic subset of $G$. Applying Lemma 2.8, generic tuples in $G^e$ have infinite order. If some $\omega\in\Omega$ has infinite order, then $\Omega\cap \Theta_n$ is a proper closed subvariety of $\Omega$ for each $n$.
\end{proof} 

Furthermore over an uncountable algebraically closed field, the existence of a single tuple $\omega\in\Omega$ topologically generating $G$ implies a generic subset of $\Omega$ possess the same property. 

\begin{lemma} Let $G$ be defined over an uncountable algebraically closed field $k$. If there is a tuple $\omega\in\Omega$ topologically generating $G$, then generic tuples in $\Omega$ generate $G$ topologically. 
\end{lemma} 
\begin{proof} First assume $G$ has characteristic $p=0$. Then Lemmas 2.5 and 2.6 and Remark 2.7 show that the existence of a single tuple $\omega\in\Omega$ topologically generating $G$ implies an open dense subset of tuples in $\Omega$ share this property (this is also true over any algebraically closed field). 
 
 In positive characteristic, Lemma 2.6 shows there is a proper closed subvariety of $G^e$ outside of which a tuple $\omega\in G^e$ generates a dense subgroup of $G$ if and only if it generates an infinite subgroup of $G$. By Lemma 2.9, the existence of a single tuple $\omega\in\Omega$ generating an infinite subgroup implies that generic tuples in $\Omega$ share this property.\end{proof} 

Now recall $G$ acts primitively on a $kG$-module $V$ if it acts irreducibly and preserves no non-trivial direct sum decomposition of $V$. The following lemmas establish that the set of tuples in $G^e$ generating a primitive subgroup is an open subset of $G^e$. 

\begin{lemma}   Let $G$ be a subgroup of $GL(V)$, with $V$ an $n$-dimensional vector space.  Then $G$ acts primitively on $V$ if and only if
\begin{itemize}
 \item[$(i)$] $G$ acts irreducibly on $V$, and
 \item[$(ii)$] For each proper divisor $d$ of $n$,  no subgroup $H$
with $[G:H]=n/d$ fixes a $d$-dimensional subspace of $V$.
\end{itemize}
\end{lemma}

\begin{proof}  To start we may assume that $G$ acts irreducibly on $V$ or else there is nothing
to prove. If $G$ acts imprimitively and irreducibly, then
$V=V_1 \oplus \ldots \oplus V_{n/d}$ with $n/d > 1$, and $G$ permutes the $V_i$ transitively.  If $H$ is the stabilizer of $V_1$, then $[G:H]=n/d$.

Conversely, suppose there is a subgroup $H$ of $G$ such that $[G:H]=n/d$, $d < n$, and $H$ fixes a $d$-dimensional subspace $V_1$ of $V$.   For $1 \le i \le n/d$, let $V_i$ be the conjugates
of $V_1$ under $G$.  Since $G$ acts irreducibly,  $V=V_1 \oplus\ldots\oplus V_{n/d}$, and  $G$ is imprimitive.
\end{proof}

Next let $n$ be any positive integer and fix a divisor $d$ of $n$. Since the free group $F_e$ is finitely generated, it has only finitely many subgroups of index $d$. Label these subgroups  $H_1(d),\ldots,H_t(d)$. For each such $H_i(d)$, $1\leq i\leq t$, choose a collection of generators
$w_{i1},\ldots, w_{is}$ of $H_i(d)$. 

\begin{lemma}   Let $G$ be a simple subgroup of $GL(V)$, with $V$ an $n$-dimensional vector space defined over an algebraically closed field.  The set of tuples $\omega\in\Omega$ 
acting primitively on $V$ forms a Zariski open subset of $G^e$.
\end{lemma}

\begin{proof}  
To begin, note the set of tuples $(x_1,\ldots,x_e)\in G^e$ such that $\langle x_1,\ldots,x_e\rangle$ fixes a common $d$-dimensional subspace of $V$ is the same as the set of $(x_1,\ldots,x_e)\in G^e$ such that $\overline{\langle x_1,\ldots,x_e\rangle}$ fixes a common $d$-dimensional subspace of $V$. By Lemma 2.5, the set of tuples in $G^e$ topologically generating
a subgroup acting reducibly on $V$ is closed.  

Now suppose $(g_1, \ldots, g_e)\in G^e$ topologically 
generates a subgroup $J$ of $G$ that acts irreducibly and imprimitively on $V$. Then by Lemma 2.11, for some proper divisor $d$ of $n$ there is a subgroup
$J_1\subset J$ such that $[J:J_1]=n/d$, and $J_1$ fixes a $d$-dimensional subspace of $V$.  Choose
a surjection $\varphi:F_e\rightarrow J$ such that $\varphi^{-1}(J_1)= H_i(d)$ for some $1\leq i \leq t$.  Then  
$\langle w_{i1}(g_1, \ldots, g_e),\ldots, w_{is}(g_1, \ldots, g_e)\rangle$ fixes a common $d$-dimensional subspace of $V$. 

By Lemma 2.5, the set of tuples $(x_1,\ldots,x_e)\in G^e$ such that the closure of $\langle w_{i1}(x_1, \ldots, x_e),\ldots, w_{is}(x_1, \ldots, x_e)\rangle$ fixes a common $d$-dimensional subspace of $V$ is closed in $G^e$. Then considering each subgroup  $H_1(d),\ldots,H_t(d)$ for each divisor $d$ of $n$, we find the set of tuples $(x_1,\ldots,x_e)\in G^e$ topologically generating an
imprimitive subgroup is closed.
\end{proof}
\\
\begin{remark} When $G$ is a simple subgroup of $GL(V)$, $e>1$, and $k$ is not algebraic over a finite field, there are tuples $(x_1,\ldots,x_e)\in G^e$ topologically generating $G$ (see Corollary 3.4 in \cite{guralnick}). So the open subset described above is nonempty. Of course this set is empty when $e=1$.

\end{remark}

To conclude this section, let us focus on  conditions for topological generation for specific classical groups. These results will be used in the proof of Theorem 1.1, and later in \cite{burness1}. In what follows, assume $G$ is a simply connected simple algebraic group of classical type, and $V$ is  the natural module for $G$. Recall for a conjugacy class $C_j$ of $G$ that $\gamma_j$ is the dimension of the largest eigenspace on $V$ of a representative of $C_j$, and $\Omega= C_1\times\cdots \times C_e$.
Set $n= \dim  V$. 
The following lemma gives a necessary condition for topological generation. 

\begin{lemma} 
If $\sum_{i=1}^e\gamma_i> n(e-1)$, then no tuple $\omega\in\Omega$ topologically generates $G$. 
\end{lemma} 
\begin{proof}   For any element $x\in C_i$, let $E_{\alpha_i}$ be a $\gamma_i$-dimensional eigenspace of $x$ on $V$. We claim 
$$\dim  (\bigcap_{i=1}^e E_{\alpha_i})= \sum_{i=1}^e\gamma_i- n(e-1).$$
\par
 This is easily checked by induction on the number of classes $e$. If $e=2$ and $\gamma_1+\gamma_2>n$, then clearly $\dim\   (E_{\alpha_1}\cap E_{\alpha_2})= \gamma_1+\gamma_2-n$.  So assume $\sum_{i=1}^k\gamma_i> n(k-1)$, and 

$$\dim\  ( \bigcap_{i=1}^k E_{\alpha_i})= \sum_{i=1}^k\gamma_i - n(k-1).$$

Then 
$$\dim\  (\bigcap_{i=1}^k E_{\alpha_i}\cap E_{\alpha_{k+1}})= (\sum_{i=1}^k\gamma_i- n(k-1)) + \gamma_{k+1} -n.$$
Letting $e=k+1$, it follows that $$\dim\ (\bigcap_{i=1}^{e} E_{\alpha_i})>0$$ whenever $\sum_{i=1}^e\gamma_i> n(e-1)$. In this case, every tuple $\omega\in\Omega$ topologically generates a reducible subgroup on the natural module, and hence no tuple in $\Omega$ will generate $G$ topologically. 
\end{proof}

\par  Next assume $M$ is the stabilizer of a 1-dimensional totally singular subspace of $V$ (so $M$ is a maximal parabolic subgroup of $G$). For linear and symplectic groups, all 1-dimensional subspaces of $V$ are totally singular. Furthermore in these cases $G$ acts transitively on the relevant set of 1-spaces, so $G/M$ is isomorphic to projective space $P_1(V)$ as a variety. Hence to show there exists some tuple $\omega\in\Omega$ fixing no 1-dimensional subspace of $V$, it suffices to show \newline $$\Omega\not\subset X:=\bigcup_{g\in G}(M^g\times \ldots \times M^g)\subset G^e.$$ (For orthogonal groups, 1-dimensional non-singular subspaces must also be considered.)

\begin{lemma} Let $G=SL_n(k)$ or $Sp_n(k)$.  Then $\sum_{j=1}^e\gamma_j\leq n(e-1)$ if and only if there is a tuple $\omega\in\Omega$ fixing no $1$-dimensional subspace of $V$. 
\end{lemma} 
\begin{proof} To begin assume $\sum_{j=1}^e\gamma_j> n(e-1)$. For each conjugacy class $C_i$, pick some $x_i\in C_i$ and let $E_{\alpha_i}$ be a maximal dimensional eigenspace of $x_i$ on $V$.  By the argument given in Lemma 2.14, it follows that $\dim\  (\bigcap_{i=1}^{e} E_{\alpha_i})>0$. In particular, there exists a 1-dimensional subspace $E$ of $V$ such that $E\subseteq \bigcap_{i=1}^{e} E_{\alpha_i}$, and $\omega=(x_1,\ldots, x_e)$ fixes $E$. Hence every tuple $\omega\in\Omega$ fixes some 1-dimensional subspace of $V$. 

For the converse direction, assume $\Omega \subset X$ where $M$ is the stabilizer of a 1-dimensional subspace of $V$. Let $\varphi: G^e\times G/M\rightarrow G^e$ be the natural projection map. Since $M$ is parabolic, $G/M$ is a complete variety and $\varphi$ is a closed map. However, $\varphi(Z)=X$ where $$Z=\{(x_1,\ldots,x_e,v)\ | \ x_1v=\ldots=x_ev=v \}.$$
So $X$ is closed, and in particular  

$$\Omega\subset Y_M= \overline{\{(x_1,\ldots,x_e)\in\Omega\mid \langle{x_1,\ldots,x_e}\rangle\subset M^g \text{, for some } g\in G\}}.$$
Applying Lemma 2.3, 
$$\sum_{j=1}^e\dim \ (G/M)^{x_j}\geq (e-1)\cdot  \dim \ G/M.$$
Note the irreducible components of $(G/M)^x$ are projective
spaces associated with each eigenspace of $x$. In particular the dimension of $(G/M)^x$ is one less than
the dimension of the largest eigenspace of $x$ on $V$. 

So $\dim\  (G/M)^{x_j}= \gamma_j-1$, and 
  $\dim  G/M=n-1$. Applying this information to the above inequality yields $$(\sum_{j=1}^e \gamma_j)-e\geq (e-1)(n-1).$$ This  implies that $\sum_{j=1}^e \gamma_j> n(e-1)$. 
 \end{proof} 
 
\begin{remark}

Since $\Omega$ is an irreducible variety, Lemmas 2.5 and 2.15 imply that under the conditions given in Theorem 1.1, an open subset of tuples $\Omega_0\subset \Omega$ fixes no 1-dimensional subspace of $V$. 
\end{remark} 
\begin{remark}
No restrictions are placed on the conjugacy classes $C_1,\ldots,C_e$ of $G$ since only $1$-dimensional subspaces of $V$ are being considered. When moving to the stabilizers of higher dimensional subspaces, different arguments are required to compute $\dim\ (G/M)^{x}$ depending on whether the $x\in C$ are semisimple, unipotent, or mixed. This issue arises when providing inductive arguments to establish topological generation for symplectic and orthogonal groups.
\end{remark} 

Now pick a maximal torus $T$ of $G$ and let $Ad:G\to GL(Lie(G))$ be the adjoint representation. Note $Lie(G)$ embeds in $V\otimes V^*$, and the rank of $G$ is the dimension of $T$. An element $x\in G$ is $\it{regular}$ if the dimension of its centralizer is equal to the rank of the group, and $\textit{regular semisimple}$ if it is both regular and diagonalizable. Call a regular semisimple element of $G$ $\textit{strongly regular}$ if it has distinct eigenvalues on the root 
spaces of $Lie(G)$ with respect to $T$. In other words, $t\in T$ is strongly regular if for any two distinct roots $\alpha_i,\alpha_j$ of $G$, $\alpha_i(t)\ne \alpha_j(t)$. Note if an element $t\in T$ has eigenvalues $\lambda_1,\ldots, \lambda_n$ on $V$, then $t$ has eigenvalues $\lambda_i\cdot\lambda_j^{-1}$ on $V\otimes V^*$, for $1\leq i\leq n$, $1\leq j\leq n$. Hence if for every $1\leq i,j,k,l\leq n$, we have that   $\lambda_i\cdot\lambda_j^{-1}= \lambda_k\cdot\lambda_l^{-1}$ implies $i=j$ and $k=l$, then $t$ is strongly regular. 

The set of strongly regular elements is clearly open in $G$ (since the complement is defined by finitely many polynomial equations) and nonempty. Recall a tuple $(x_1,\ldots,x_e)\in\Omega$ is said to be strongly regular if there is a nontrivial word $w\in F_e$ such that $w(x_1,\ldots,x_e)$ has that property.  Applying Lemmas 2.8 and 2.9, being strongly regular and having infinite order are generic properties of $G^e$ (defined over an uncountable algebraically closed field).  Hence if some tuple $\omega\in\Omega$ is strongly regular of infinite order, then generic tuples in $\Omega$ share this property. 

Proposition 2.19 below provides conditions sufficient to ensure the existence of a tuple $\omega\in\Omega$ topologically generating $G=SL_n(k)$. In the next section it is shown that these conditions may in fact be satisfied.  We first recall a useful fact from the literature. Here, a maximal rank subgroup is a subgroup that contains a maximal torus of $G$.

\begin{lemma} 
Let $G=SL_n(k)$, $n\geq 3$, and let $H$ be a proper maximal rank subgroup of $G$. Then the connected component $H^{\circ}$ acts irreducibly on the natural module for $G$.
\end{lemma}

\begin{proof} 
This follows from Section 6 of \cite{lubeck}, and the main theorem of \cite{liebeck5}. 
\end{proof} 

\begin{proposition} Let $C_1,\ldots,C_e$ be noncentral conjugacy classes of $G=SL_n(k)$, where $k$ is an uncountable algebraically closed field, and $n\geq 3$. Assume 
\begin{itemize}
    \item[$(i)$] there is a tuple $\omega\in\Omega$ that acts irreducibly and primitively on the natural module $V$, and
    \item[$(ii)$] there is a tuple $\omega\in\Omega$ that is strongly regular of infinite order.
\end{itemize} Then there is a tuple $\omega\in\Omega$ topologically generating $G$.  
\end{proposition}
\begin{proof} Assume that conditions $(i)$ and $(ii)$ both hold. Then by Lemmas 2.5 and  2.11, a non-empty open subset of tuples $\Omega_0\subset\Omega$ share property $(i)$. As being strongly regular of infinite order is a generic property, a generic subset of tuples in $\Omega$ will satisfy condition $(ii)$. In particular, some $\omega\in\Omega$ satisfies both conditions. Let $H$ be the closure of the subgroup generated by this tuple.  Note  $Lie(H)$ is a sum of eigenspaces for a  strongly regular element $x\in H$, and so is a direct
sum $T_0\oplus N_{\alpha_1}\oplus\ldots\oplus N_{\alpha_j}$ of some collection of nontrivial weight spaces for $T$. Furthermore,
$ T_0\subseteq T$, where $T = C_G(x)$.  

Assume $Lie(H)\ne Lie(G)$. Then $Lie(H)$ is invariant under $T$, and
the closure of $\langle H, T\rangle$ is a proper maximal rank subgroup of $G$. However by Lemma 2.18 this is impossible. Hence $Lie(H)= Lie(G)$ which in turn implies $H = G$. So there exists a tuple $\omega\in\Omega$ topologically generating $G$. \end{proof}

\section{Proof of Theorem 1.1} 

We are now in a position to prove the main theorem. In this section, let $C_1,\ldots,C_e$ be noncentral conjugacy classes of $G=SL_n(k)$, where $k$ is an uncountable algebraically closed field of characteristic $p\geq 0$, and $V$ is the natural module for $G$. Pick a class $C$ from $C_1,\ldots,C_e$, and $x\in C$. Assume $x$ has eigenvalues  $\lambda_1,...,\lambda_t$ on $V$, and recall  $x$ is conjugate to a block diagonal matrix of the form

$$J= \left[\begin{array}{ccccc}
     J_{r_1}(\lambda_1)  & && &   \\
 & J_{r_2}(\lambda_2) &&&\\
  \\
    &  &  & &  J_{r_t}(\lambda_t) \\
   \end{array}\right]$$
where 
$$J_{r_i}(\lambda_i)= \left[\begin{array}{cccccc}
    \lambda_i & 1 & 0  &\ldots & 0      \\
  0 &  \lambda_i & 1 &\ldots  &0  \\
  \ldots \\
  \ldots \\
  0 & \ldots & 0  & \lambda_i & 1 \\
   0 & \ldots & 0  & 0 & \lambda_i \\
   \end{array}\right].$$
   \\
   We say that $J_{r_i}(\lambda_i)$ 
   is a \textit{Jordan block} of $x$, where $r_i$ indicates the size of the Jordan block, and $\lambda_i$ is the corresponding eigenvalue. Furthermore we say $J_{r_1}(\lambda_1)\oplus \ldots\oplus J_{r_t}(\lambda_t)$ is the \textit{Jordan form} of $x$. Note in this definition we are not assuming that the $\lambda_i$ are distinct. 

Finally, recall a conjugacy class $C$ of $G$ is \textit{quadratic} if a representative $x\in C$ has a degree two minimal polynomial. Note elements in a quadratic class $C$ have Jordan form $J_{r_1}(\lambda_1)\oplus \ldots\oplus J_{r_t}(\lambda_t)$, where $r_i\leq2$ for $1\leq i\leq t$, and $x$ has at most two distinct eigenvalues on $V$. 
   
   The ``only if'' direction of Theorem 1.1 follows from Lemma 2.14, in combination with the following well-known fact from linear algebra. 

\begin{lemma} Suppose $e=2$ and $C_1,C_2$ are quadratic conjugacy 
classes of $G$. If $n\geq 3$, then
every tuple $\omega\in\Omega$ generates a reducible subgroup of $V$. 
\end{lemma}
\begin{proof}
This follows from Lemma 3.14 in \cite{burness2}. 
\end{proof}

For the converse direction, the argument proceeds by induction on the dimension of the natural module. For both the base case and the induction, it is helpful to record some information about the maximal subgroup structure of $SL_n(k)$. This follows from more general work on the maximal subgroups of classical groups carried out by Aschbacher \cite{aschbacher}, Liebeck and Seitz \cite{liebeck5}, and others.

In outline, the results state that the positive-dimensional maximal closed subgroups of classical algebraic groups arise in a collection of geometric families $$\mathcal{C}= \mathcal{C}_1 \cup\mathcal{C}_2 \cup \mathcal{C}_3 \cup\mathcal{C}_4 \cup \mathcal{C}_6$$ and a collection $\mathcal{S}$ of almost simple irreducibly embedded subgroups. The geometric families can be described in a uniform fashion in terms of their action on the natural module $V$. Roughly speaking, the collection $\mathcal{C}_1$ comprises subgroups that stabilize a proper subspace of $V$, the members of $\mathcal{C}_2$ stabilize an orthogonal decomposition of $V$, members of $\mathcal{C}_3$ stabilize a totally singular decomposition of $V$, members of $\mathcal{C}_4$ stabilize a tensor decomposition of $V$, and members in $\mathcal{C}_6$ are normalizers of classical groups. The remaining positive dimensional maximal subgroups $\mathcal{S}$ are less easily described in a systematic fashion, but have the nice property that their connected components are simple groups modulo scalars. See \cite{liebeck5} for further details on these subgroup collections. 

Note that subfield subgroups are finite closed subgroups acting irreducibly and tensor indecomposably on the natural module. To exclude the possibility that a tuple $\omega\in\Omega$ topologically generates a group contained in a subfield subgroup, it clearly suffices to show that the tuple topologically generates an infinite subgroup. 

The following table lists the approximate  structures of the maximal geometric subgroups of $G=SL_n(k)$. Note the collection $\mathcal{C}_3$ described above is empty for $SL_n(k)$, and $P_m$ refers to a parabolic subgroup of $SL_n(k)$, which stabilizes an $m$-dimensional subspace of $V$. 
\\
\begin{center} 

\begin{tabular}{ |p{2cm}||p{2.5cm}|p{4cm}|p{2cm}|  }
 \hline
 \multicolumn{3}{|c|}{Table 1: The maximal geometric subgroups of $SL_n(k)$} \\
 \hline
Class & structure & conditions  \\
 \hline
 $\mathcal{C}_1$   & $P_m$    & $1\leq m \leq n-1$  \\
 $\mathcal{C}_2$&   $GL_m\wr \mathrm{S}_t$  & $n=mt, t\geq 2$   \\
 $\mathcal{C}_4$ & $GL_{n_1}\otimes GL_{n_2}$ &  $n=n_1n_2$, $2\leq n_1<n_2$\\
     &$(\otimes^{t}_{i=1}GL_m).\mathrm{S}_t$ &  $n=m^t$, $m\geq 3,t\geq 2$ \\
 $\mathcal{C}_6$ &   $Sp_n$  & $n$ even \\
 & $SO_n$  & $p\ne 2$    \\

 \hline
\end{tabular}

\end{center} 
\
\\

\begin{theorem} 
For $G=SL_n(k)$, $n\geq 2$, the conjugacy classes of maximal closed geometric subgroups  of $G$ are listed in Table 1. 
\end{theorem} 

\begin{proof} 
See the main theorem of \cite{liebeck5}.
\end{proof} 

\begin{corollary} Let $M$ be a maximal positive dimensional closed subgroup of $SL_3(k)$.  Then one of the following holds:
\begin{itemize} 
\item[$(i)$]  $M$ is irreducible and primitive. In this case $M^{\circ} \cong SO_3(k)$.
\item[$(ii)$] $M$ is irreducible and imprimitive. In this case $M\cong  (GL_1(k)\wr S_3)\cap SL_3(k)$ is the normalizer of a maximal torus.
\item[$(iii)$] $M$ is reducible. In this case $M$ is the stabilizer of a line or a hyperplane on the natural module.
\end{itemize} 
\end{corollary}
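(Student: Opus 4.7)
The plan is to read off the classification directly from Theorem 3.1 by specialising $n=3$ in each of the listed classes, and then to dispose of the remaining family of maximal subgroups (the non-geometric, ``almost simple modulo scalars" type, which Theorem 3.1 does not explicitly address) by invoking the classification of low-dimensional irreducible representations of simple algebraic groups.

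First I would go through the rows of the table in Theorem 3.1 with $n=3$. In class $\mathcal{C}_1$ the only parabolics are $P_1$ and $P_2$, i.e.\ the stabilizers of a line and of a hyperplane in $V$; these give exactly the reducible subgroups in (iii). In class $\mathcal{C}_2$ the condition $n = mt$ with $t \geq 2$ forces $m=1$ and $t=3$, so $M \cong (GL_1 \wr \mathrm{S}_3)\cap SL_3(k)$, which is the normalizer of a maximal torus $T$; since $\mathrm{S}_3$ permutes the three weight lines transitively, this $M$ is irreducible and imprimitive, yielding (ii). In class $\mathcal{C}_4$ the first subtype requires $2 \leq n_1 < n_2$ with $n_1 n_2 = 3$, which is impossible, and the second requires $m \geq 3$, $t \geq 2$, forcing $n = m^t \geq 9$; so $\mathcal{C}_4$ contributes nothing. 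In class $\mathcal{C}_6$, $Sp_n$ is ruled out since $n=3$ is odd, while $SO_3(k)$ appears exactly when $p \neq 2$, and acts irreducibly and primitively on $V$; this gives (i).

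Second, I would rule out any additional irreducible primitive ``almost simple modulo scalars" maximal subgroup $M$. The connected component $M^\circ$ would be a simple algebraic group admitting a 3-dimensional irreducible tensor-indecomposable rational representation. Inspection of the standard tables of small-dimensional irreducible representations of simple algebraic groups (e.g.\ those of Lübeck cited in Section 2) shows the only such candidates are $SL_3$ itself via its natural module, and $SL_2$ via the symmetric square $\mathrm{Sym}^2$ in characteristic $\neq 2$. The latter preserves a nondegenerate symmetric bilinear form, so its image lies inside $SO_3(k)$ and produces no new maximal subgroup. In characteristic $2$, $\mathrm{Sym}^2$ of the natural $SL_2$-module is reducible, so there is no irreducible primitive positive-dimensional subgroup at all and case (i) is simply vacuous.

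The steps are essentially bookkeeping: the only mildly delicate point is the second part, confirming that no exotic $\mathcal{S}$-type subgroup sneaks in below the dimension threshold; this is routine given the cited representation-theoretic tables. Combining the two steps exhausts the conjugacy classes of maximal positive-dimensional closed subgroups of $SL_3(k)$ and matches them to (i)--(iii).
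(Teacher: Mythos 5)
Your proof is correct and follows the same route as the paper, which states Corollary 3.2 without an explicit proof as a direct specialisation of the table in Theorem 3.1 to $n=3$. In fact your argument is slightly more complete than what the paper records: Theorem 3.1 only lists the subgroups in $\mathcal{C}_1\cup\cdots\cup\mathcal{C}_4\cup\mathcal{C}_6$, and your second step---checking via the tables of small-dimensional irreducible representations that the only irreducible tensor-indecomposable almost simple candidate in dimension $3$ is $SL_2$ acting through $\mathrm{Sym}^2$, which lands inside $SO_3(k)$ (and does not exist in characteristic $2$)---is precisely the verification needed to justify the corollary's claim of exhaustiveness, which the paper leaves implicit.
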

\begin{proof} 
This follows from Theorem 3.2, and the fact  $\mathcal{S}$ is empty for $SL_3(k)$. 
\end{proof} 

For the base case of our argument, it will be helpful to record the dimensions of noncentral conjugacy classes in  $SL_3(k)$. 

\begin{lemma} Let $C$ be a noncentral conjugacy class of $SL_3(k)$. If $C$ is quadratic then $\dim   C=4$. Otherwise, $\dim  C=6$. 
\end{lemma}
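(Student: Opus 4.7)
The plan is to compute $\dim C_G(x)$ for each non-central element $x \in G = SL_3(k)$ via the Jordan decomposition, and then use $\dim C = \dim G - \dim C_G(x) = 8 - \dim C_G(x)$ to get the dimension of the conjugacy class. In parallel, I will read off the degree of the minimal polynomial in each case and verify it matches the claimed dichotomy: $\dim C = 4$ iff the minimal polynomial has degree $2$, and $\dim C = 6$ otherwise.

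First I would handle the \textbf{semisimple case}, where $x$ is diagonalizable with eigenvalues multiplying to $1$. If $x$ has three distinct eigenvalues, then $C_G(x)$ is a maximal torus of dimension $2$, giving $\dim C = 6$, and the minimal polynomial has three distinct linear factors (degree $3$). If $x$ has exactly two distinct eigenvalues with multiplicities $(2,1)$, then $C_G(x) = S(GL_2 \times GL_1)$ of dimension $4$, giving $\dim C = 4$, and the minimal polynomial has degree $2$. The case of a single eigenvalue is excluded by non-centrality.

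Next I would handle the \textbf{unipotent case}, using Jordan block structure. The regular unipotent (a single Jordan block of size $3$) has a $2$-dimensional abelian centralizer in $SL_3$, so $\dim C = 6$, and its minimal polynomial $(X-1)^3$ has degree $3$. The subregular unipotent of Jordan type $(2,1)$ has centralizer of dimension $4$ (easily seen by solving $[Y,J]=0$ for $J$ a Jordan matrix of type $(2,1)$ and then imposing $\tr Y = 0$), so $\dim C = 4$ and the minimal polynomial $(X-1)^2$ has degree $2$.

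Finally I would handle the \textbf{mixed case}, writing $x = su$ with $s$ non-central semisimple and $u \neq 1$ unipotent commuting with $s$. Then $s$ must have eigenvalues of type $(a,a,b)$ with $a \neq b$, and $u$ lies in $C_G(s) \cong S(GL_2 \times GL_1)$; the unipotent piece is a nontrivial transvection on the $a$-eigenspace. Computing the centralizer inside $C_G(s)$ of such a $u$ gives dimension $2$, so $\dim C = 6$, and the minimal polynomial $(X-a)^2(X-b)$ has degree $3$. One should also absorb the degenerate subcase in which $s = \zeta I$ is a nontrivial central cube root times the identity and $u$ is unipotent; here $C_G(x) = C_G(u)$ and the minimal polynomial is $(X-\zeta)^k$ where $k$ is the size of the largest Jordan block of $u$, so this case is covered by the unipotent analysis. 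No real obstacle arises here beyond careful bookkeeping; the only mildly delicate point is ensuring the scalar-times-unipotent case is not double-counted and is correctly identified with a unipotent class up to center.
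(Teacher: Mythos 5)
Your proof is correct: every case is enumerated, the centralizer dimensions are right ($2$ for the regular semisimple, regular unipotent and mixed classes; $4$ for the subregular unipotent and for the semisimple class with eigenvalue multiplicities $(2,1)$), and the minimal-polynomial degrees match the claimed dichotomy in each case. The paper's own proof is a two-sentence argument organized around a different split: an element of $SL_3(k)$ is regular exactly when its minimal polynomial has degree $3$, regular elements have centralizer of dimension equal to the rank (namely $2$), so regular classes have dimension $8-2=6$; and a noncentral non-regular element is quadratic with (as the paper puts it, ``trivially'') a $4$-dimensional class. Your exhaustive Jordan-type enumeration is more elementary and, usefully, actually verifies the identification of the non-regular noncentral classes with the quadratic ones --- the $(a,a,b)$ semisimple classes and the scalar multiples of the subregular unipotent --- which the paper leaves implicit. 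The cost is length; the benefit is that nothing is taken on faith and the explicit list of quadratic classes is exactly what gets reused in the case analysis for $SO_3(k)$ and the torus normalizer in the proof of Theorem 3.4.
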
 
\begin{proof}
 Regular elements in $SL_3(k)$ have two dimensional centralizers, and hence if $C$ is a conjugacy class of regular elements, $\dim  C=6$. If $C$ is not a conjugacy class of regular elements, then $C$ is quadratic and
 representatives of $C$ either have Jordan form $x= J_2(\lambda_1)\oplus J_1(\lambda_2)$, or $x= J_1(\lambda_1)\oplus J_1(\lambda_1)\oplus J_1(\lambda_2)$. In either case, $\dim C_G(x)=4$ and $\dim  C=8-4=4$. 
\end{proof}
 
  The following theorem establishes  the base case $n=3$ of Theorem 1.1. For the case  $n=2$, see Theorem $4.5$ below. 
 \begin{theorem}

Let $C_1,\ldots,C_e$ be noncentral conjugacy classes of $G=SL_3(k)$, where $k$ is an uncountable algebraically closed field. Assume  $\sum_{j=1}^e \gamma_j\leq 3(e-1)$, and it is not the case that $e=2$ and $C_1$, $C_2$ are quadratic. Then there is a tuple $\omega\in\Omega$ topologically generating $G$.  
 
\end{theorem}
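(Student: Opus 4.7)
The plan is to apply Lemma 2.1 to each positive-dimensional maximal closed subgroup $M$ of $G=SL_3(k)$ together with the finite subfield subgroups (relevant only in positive characteristic), and to show $\dim Y_M<\dim\Omega$ in each case. Combined with the fact (Lemma 2.11) that generic tuples in $\Omega$ have infinite order, this will force generic tuples to generate an infinite group avoiding all the $Y_{M_i}$, and hence to be Zariski dense in $G$. By Corollary 3.3, the positive-dimensional maximal closed subgroups of $SL_3(k)$ fall up to conjugacy into three types: the maximal parabolics $P_1$, $P_2$; the torus normalizer $M=N_G(T)\cong(GL_1\wr S_3)\cap SL_3$ of dimension $2$; and $SO_3(k)$ of dimension $3$.

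For a parabolic $M=P_m$, $\dim G/M=2$ and $(G/M)^x$ is a union of projectivizations of eigenspaces of $x$ on $V$ (for $m=1$) or on $V^*$ (for $m=2$), whose top-dimensional component has dimension $\gamma_j-1$. Lemmas 2.1 and 2.2 then combine to give
$$\dim Y_M\;\leq\;\dim\Omega+\sum_{j=1}^e\gamma_j-3e+2\;\leq\;\dim\Omega-1$$
by hypothesis (i). For $M=N_G(T)$, with $\dim G/M=6$, I would decompose $N_G(T)=T\sqcup\bigsqcup_{w\in S_3\setminus\{1\}}wT$ and bound $\dim(C_j\cap wT)$ by matching the characteristic polynomial of $wt$ (which is a regular function of $t\in T$) against the class-defining data of $C_j$. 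The outcome I expect is that quadratic classes meet $M$ in dimension at most $1$, while non-quadratic (regular) classes meet $M$ in dimension at most $2$, with equality attained only for the class of cube roots of unity, whose characteristic polynomial $\lambda^3-1$ is automatically satisfied on both $3$-cycle cosets of $T$ (with the analogous role played by the regular unipotent class in characteristic $3$). A case check of $\dim\Delta+\dim G/M$ against $\dim\Omega$ in all configurations permitted by the hypotheses then yields $\dim Y_M<\dim\Omega$.

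For $M=SO_3(k)$ (relevant only in odd characteristic), $\dim G/M=5$; since every element of $SO_3$ has eigenvalues on $V$ of the form $\{\lambda,1,\lambda^{-1}\}$, only classes compatible with this pattern meet $M$, and any $SO_3$-orbit on $C_j\cap M$ has dimension at most $2$, giving $\dim Y_M\leq 2e+5<\dim\Omega$ for all permitted configurations. Finite subfield subgroups $M=SL_3(\mathbb{F}_q)$ satisfy $\dim Y_M\leq\dim G=8$, which is strictly less than the minimum permitted value $\dim\Omega=10$ (the excluded case $e=2$ with both classes quadratic being precisely where $\dim\Omega=8$). The main obstacle is the torus-normalizer case in tight configurations, most delicately when $e=3$ and all three classes are quadratic so that $\dim\Omega=12$ and the refined bound $\dim(C_j\cap M)\leq 1$ per quadratic class is essential to conclude $\dim Y_M\leq 9$, as well as the cube-root class, where the two nontrivial $3$-cycle cosets of $T$ lie entirely inside a single regular class and require the careful coset-by-coset estimate rather than the crude bound $\dim(C_j\cap M)\leq\dim M$.
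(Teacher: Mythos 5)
Your proposal is correct and follows essentially the same route as the paper: bound $\dim Y_M \leq \dim\Delta + \dim G/M$ for each of the parabolics, the torus normalizer, $SO_3(k)$, and the (countably many) subfield subgroups, and check the inequality against $\dim\Omega$ in the tight configurations, with the refined bound $\dim(C_j\cap N_G(T))\leq 1$ for quadratic classes doing the decisive work exactly where you say it does. The only cosmetic difference is that the paper obtains that refined bound by identifying the positive-dimensional intersections as classes of outer involutions with one-dimensional centralizer in $N_G(T)$, rather than by your coset-by-coset characteristic-polynomial matching; the numerical outcome and the critical cases are identical.
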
 
\begin{proof}
First we show generic tuples in $\Omega$ generate an infinite subgroup of $G$.  In view of Lemma 3.4, we note that $\dim  \Omega\geq 10$. 

Assume $p > 0$, and $M$ is a subfield subgroup of $G$.  Pick a class $C$ from the list $C_1,\ldots,C_e$. Clearly $\dim \ (C\cap M)= 0$ and $\dim   G/M=8$. So  $\dim  \Omega \geq 10 > \dim  Y_M$, and hence $Y_M$ is a proper closed subvariety of $\Omega$. Now let  $M_1,M_2,\ldots$ be an enumeration of all subfield subgroups of $G$ up to conjugacy. Then  $\bigcup_{i\geq 1}  Y_{M_i}$ is a countable union of proper closed subvarieties of $\Omega$. So  generic tuples in $\Omega$ are contained in $(\bigcup_{i\geq 1}  Y_{M_i})^c$, and hence  generate an infinite subgroup. 

 Recall that  $\Delta=\prod_{j=1}^e(C_j\cap M)$. Working in arbitrary characteristic, it now only remains to show that  $$\dim  \Omega>  \dim  \Delta + \dim   G/M$$ for each maximal positive-dimensional closed subgroup $M$ of $G$. Once this is established, it follows that generic tuples in $\Omega$ generate an infinite group contained in no positive-dimensional maximal closed subgroup, and hence generic tuples in $\Omega$ generate $G$ topologically. Applying Corollary 3.3, it suffices to treat the following cases.

\begin{itemize}
\item[$(i)$] $M= SO_3(k)$. Pick a class $C$ from $C_1,\ldots,C_e$ such that $M\cap C$ is nonempty. Then $M \cap C$ is a finite union of $M$-classes, each of which has dimension $2$, so $\dim\  (M\cap C)=2$.  As $ \dim  G/M=5$,  it follows $\dim   \Omega>  2e +5$. 
\\
\item[$(ii)$] $M=N_G(T)$ is the normalizer of a maximal torus $T$. Pick a class $C$ from $C_1,\ldots,C_e$. Note $M^{\circ}=T$, so $\dim  G/M=6$ and $\dim\  (M\cap C)\leq 2$.  Applying Lemma 3.4, $\dim  \Omega> 2e + 6$ unless 
 $(a)$ $e=2$ and at most one of $C_1,C_2$ are non-quadratic, or $(b)$ $e=3$ and each $C_i$ is quadratic. 
 \par
If $C$ is quadratic, then either $M\cap C$ is finite or $C$ is a class of involutions. If $M\cap C$ is finite the desired inequality immediately holds. So assume $C$ is a class of involutions where  $M\cap C$ is positive-dimensional. Then $M\cap C$ is a union of classes of outer involutions in $M$
(there are two such classes if characteristic $p\neq 2$, and one when $p=2$). These outer involutions have a one dimensional centralizer in $M$. Hence if $C$ is quadratic, 
$\dim\  (M\cap C)\leq 1$. It follows that $\dim  \Omega> \dim  \Delta +\dim  G/M$.
   \\
\item[$(iii)$]  $M$ is reducible. Applying Lemmas 2.5 and 2.15  we see that generic tuples in $\Omega$ do not fix a 1-space or hyperplane (by considering the dual space).
 \end{itemize}
\end{proof}

Next we turn to the inductive argument. In order to make use of the inductive hypothesis, we will need to relate the conjugacy classes $C_1,\ldots, C_e$ of $SL_n(k)$ to classes $C_1',\ldots, C_e'$ of $SL_{n-1}(k)$. It will also be necessary to show the inequality $\sum_{j=1}^e \gamma_j'\leq (n-1)(e-1)$ holds, where $\gamma_j'$ is the dimension of the largest eigenspace of a representative of $C_j'$ on the natural module for $SL_{n-1}(k)$. 

To carry out this process, pick a class $C$ in $C_1,\ldots,C_e$, and some element $x\in C$.  Assume $x$ has Jordan form $J_{r_1}(\lambda_1)\oplus \ldots \oplus J_{r_t}(\lambda_t)$ on the natural module, and $\lambda_j$ is an eigenvalue corresponding to a maximal dimensional eigenspace $E_{\lambda_j}$. Finally, let $J_{r_j}(\lambda_j)$ be a Jordan block such that $r_j\leq r_i$ for all Jordan blocks $J_{r_i}(\lambda_i)$ with  $\lambda_i=\lambda_j$.

Now replace the Jordan block $J_{r_j}(\lambda_j)$ in the Jordan form of $x$ with a new Jordan block $J_{r_j-1}(\lambda_j)$. (If $r_j=1$, we remove the block $J_{r_j}(\lambda_j)$). Note an element $x'$ with this new Jordan form (say viewed in $GL_{n-1}(k)$) has determinant $\lambda^{-1}$. However, by scaling and restricting conjugation, we get a new conjugacy class $C^\prime$ of $SL_{n-1}(k)$. Every class $C_i^\prime$ can be extended to the class $C_i$ of $SL_n(k)$ in the obvious way, and rescaling does not effect generation properties. 

\begin{lemma} Let $C_1,\ldots,C_e$ be noncentral conjugacy classes of $SL_{n}(k)$, where $n\geq 3$ and $k$ is an uncountable algebraically closed field. Assume
\begin{itemize}
\item[$(i)$] $\sum_{i=1}^e \gamma_i\ \leq n(e-1)$, and 
\item[$(ii)$] it is not the case that $e=2$, and $C_1$, $C_2$ are quadratic. 
\end{itemize}
Finally, assume $C{_1}^\prime,\ldots,C_e^\prime$ are the classes of $SL_{n-1}(k)$ described above. Then $\sum_{i=1}^e \gamma_i'\leq (n-1)(e-1)$. 
\end{lemma}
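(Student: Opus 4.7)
The plan is to track the change $\varepsilon_i := \gamma_i - \gamma_i' \in \{0, 1\}$ induced by the restriction operation, and then to split into the cases $e = 2$ and $e \geq 3$.

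First I would compare $\gamma_i$ and $\gamma_i'$. Since the $\mu$-eigenspace dimension of an element equals its number of Jordan blocks with eigenvalue $\mu$, inspecting the two sub-cases of the construction (removing a $J_1(\lambda)$ if $k = 1$, reducing $J_k(\lambda)$ to $J_{k-1}(\lambda)$ if $k \geq 2$) shows that $\gamma_i' = \gamma_i - 1$ precisely when the maximizing eigenvalue $\lambda$ is unique \emph{and} $k = 1$, while $\gamma_i' = \gamma_i$ in every other case. Setting $d := \sum_{i=1}^{e} \varepsilon_i$, the desired inequality becomes $d \geq \sum \gamma_i - (n-1)(e-1)$.

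Two dichotomy observations will do the work. First, if $\gamma_i > n/2$ then $\varepsilon_i = 1$: the existence of a second eigenvalue with $\gamma_i$ blocks, or of $\gamma_i$ blocks at $\lambda$ all of size at least two, would each force the total dimension to exceed $n$. Second, if $n$ is even, $\gamma_i = n/2$, and $\varepsilon_i = 0$, then $C_i$ is quadratic: the only Jordan types compatible with these constraints are $\lambda J_2^{n/2}$ (unique $\lambda$, every block of size two) or $\lambda J_1^{n/2} \oplus \mu J_1^{n/2}$ (two eigenvalues tying for the maximum, all blocks of size one), each having minimal polynomial of degree two.

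For $e = 2$ the two observations settle the lemma directly. If $\gamma_1 + \gamma_2 \leq n - 1$ the trivial bound $\gamma_i' \leq \gamma_i$ suffices; otherwise $\gamma_1 + \gamma_2 = n$, and either some $\gamma_i > n/2$ (in which case the first observation gives $\varepsilon_i = 1$) or else $\gamma_1 = \gamma_2 = n/2$ with $n$ even, in which case hypothesis (ii) together with the second observation forces at least one $\varepsilon_i = 1$. For $e \geq 3$ I would argue by contradiction: if $d < g := \sum \gamma_i - (n-1)(e-1)$, then $d \leq e - 2$, and the contrapositive of the first observation gives $\sum \gamma_i \leq d(n-1) + (e-d) \lfloor n/2 \rfloor$. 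Substituting into the definition of $g$ and using $d \leq e - 2$, a direct simplification in which the $n$-dependent terms cancel yields $e < 3$, a contradiction.

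The main obstacle I anticipate is nailing down the second dichotomy — verifying that at $\gamma_i = n/2$ the only obstructions to $\varepsilon_i = 1$ are precisely the two quadratic Jordan forms listed. That identification makes hypothesis (ii) of the theorem exactly the right exclusion for the single borderline configuration in the $e = 2$ case; once both observations are in place, the combinatorial estimates for $e \geq 3$ go through routinely.
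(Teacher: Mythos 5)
Your proposal is correct, and its local engine coincides with the paper's: both arguments rest on the observation that a class with $\gamma_i'=\gamma_i$ must satisfy $\gamma_i\leq n/2$, with equality forcing $C_i$ to be quadratic, so that hypothesis $(ii)$ excludes exactly the one borderline configuration in the $e=2$ case. You diverge from the paper in two respects, both to your advantage. First, your determination of when $\gamma_i'=\gamma_i-1$ is more careful: the paper asserts that $k=1$ always gives $\gamma_i'=\gamma_i-1$, which overlooks a second eigenvalue tying for the maximal eigenspace (your $\lambda J_1^{n/2}\oplus\mu J_1^{n/2}$ configuration); recording a drop only when the maximizing eigenvalue is unique \emph{and} $k=1$ is the accurate statement, and it is what makes your second dichotomy exhaustive. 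Second, for $e\geq 3$ the paper inducts on $e$, bounding the last class by $\gamma_e'\leq n-1$ and invoking the statement for $C_1,\dots,C_{e-1}$; as written this requires the sub-collection to satisfy hypothesis $(i)$ with $e-1$ classes, which does not follow from hypothesis $(i)$ for $e$ classes (e.g.\ $n=4$, $e=3$, $\gamma_1=\gamma_2=3$, $\gamma_3=2$ satisfies $\sum\gamma_i=8\leq 8$ while $\gamma_1+\gamma_2=6>4$). Your direct count --- at least $g=\sum\gamma_i-(n-1)(e-1)\leq e-1$ classes must drop, and if only $d\leq e-2$ drop then $\sum\gamma_i\leq d(n-1)+(e-d)\lfloor n/2\rfloor$ --- sidesteps this entirely, and the final algebra does close: the assumed inequality $d+1\leq g$ reduces to $(e-d)(n-2)/2\leq n-2-d$, which with $e-d\geq 2$ and $n\geq 3$ forces $d=0$ and then $e\leq 2$, contradicting $e\geq 3$. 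So your route is not only valid but the more robust of the two.
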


\begin{proof}
Pick a class $C=C_i$, and $x\in C$. Assume $x$ has Jordan form $J_{r_1}(\lambda_1)\oplus \ldots\oplus  J_{r_t}(\lambda_t)$ on the natural module, and  $J_{r_j}(\lambda_j)$ is the Jordan block replaced in $x$ to obtain the Jordan form of elements in $C^\prime=C_i^\prime$. If $r_j=1$, then $\gamma_i^\prime=\gamma_i-1$. If $r_j>1$, then $\gamma_i^\prime= \gamma_i$. In the latter case, all Jordan blocks of $x$ have size greater than one. In particular if $\gamma_i^\prime=\gamma_i$ we have $\gamma_i^\prime\leq\frac{n}{2}$ with equality if and only if $C$ is quadratic.

We now argue by induction on the number of classes $e$. For the base case, assume $e=2$. Note by condition $(i)$, $\gamma_1+\gamma_2\leq n$. If either $C_1$ or $C_2$ contain a Jordan block of size one, then $\gamma^\prime_1+\gamma^\prime_2\leq\gamma_1+\gamma_2-1\leq n-1$. So assume $C_1$ and $C_2$ contain no Jordan blocks of size one. As it is not the case that both $C_1$ and $C_2$ are quadratic, we have  $\gamma_1^\prime+\gamma_2^\prime \leq n-1$. Now using the inductive hypothesis assume $\sum_{i=1}^{e-1}\gamma_i^\prime\leq (e-2)(n-1)$. Then $\sum_{i=1}^{e}\gamma_i^\prime \leq (e-1)(n-1)$.
\end{proof}

Let $\Omega^\prime= C_1^\prime\times\ldots\times C_e^\prime$ be the product of the classes of $SL_{n-1}(k)$ described in the above lemma. We may now complete the proof of the main theorem. \newline
\newline 
\emph{Proof of Theorem 1.1}
\newline 
\newline
Assume $n\geq4$. Using the inductive hypothesis, there exists a tuple $\omega^\prime\in \Omega^\prime$ topologically generating $SL_{n-1}(k)$. Extending $\omega^\prime$ to a tuple $\omega\in\Omega$, there is an  $\omega\in\Omega$ topologically generating a group which contains an isomorphic copy of $SL_{n-1}(k)$. In particular, this implies there is a tuple $\omega\in\Omega$ topologically generating a group which has a composition factor of dimension at least $n-1$ on the natural module. As $\omega$ generates a group acting irreducibly on space of dimension at least $n-1$, Lemma 2.5 implies a generic subset of tuples $\Omega_0\subset\Omega$ share this property. Furthermore Lemmas 2.5 and 2.15 imply a generic subset of tuples in $\Omega$ fix no $1$-space or hyperplane (by considering the dual space). Taken in combination, these facts imply generic tuples in $\Omega$ act irreducibly on the natural module. 

Second, the existence of a tuple $\omega\in\Omega$ topologically generating a group containing a copy of  $SL_{n-1}(k)$ implies there is a tuple $\omega\in\Omega$ topologically generating a group $N$ such that any finite index subgroup of $N$ acts irreducibly on a subspace of dimension greater than $n/2$. Recall the free group $F_e$ on $e$ letters has only finitely many subgroups of index $d$ for each divisor $d$ of $n$. Label these subgroups $H_1(d),\ldots, H_t(d)$, and for each $H_i(d)$ choose a collection of generators $w_{i1}, \ldots, w_{is}$ for $H_i(d)$. Repeating the argument given in Lemma 2.12, we find the set of tuples $(x_1,\dots,x_e)\in G^e$ such that $\langle w_{i1}(x_1, \ldots, x_e),\ldots, w_{is}(x_1, \ldots, x_e)\rangle$ fixes a common $d$-dimensional subspace of $V$ is closed in $G^e$. Then considering each subgroup  $H_1(d),\ldots,H_t(d)$ for each divisor $d$ of $n$, we find the set of tuples $(x_1,\ldots,x_e)\in \Omega$ having a finite index subgroup acting irreducibly on a subspace of dimension greater than $n/2$ is non-empty and open. It follows there is a tuple $\omega\in\Omega$ generating a group having properties $(i)$ and $(ii)$ in Lemma 2.11. Applying Lemma 2.12, generic tuples in $\Omega$ act primitively on the natural module. 

Similarly, using the inductive hypothesis there is a tuple $(x_1,\ldots,x_e)\in\Omega$ and word $w\in F_e$ such that $w(x_1,\ldots, x_e)$ has infinite order, and a tuple $(x_1,\ldots,x_e)\in\Omega$ and word $w\in F_e$ such that $w(x_1,\ldots, x_e)$ is strongly regular. Applying Lemmas 2.8 and 2.9, generic tuples in $\Omega$ must share both these properties. The finite intersection of generic subsets is a generic set, and over an uncountable field generic sets are non-empty.  Hence applying Proposition  2.19, there is a tuple $\omega\in\Omega$ topologically generating $G$. Finally, applying Lemmas 2.14 and 3.1 we see that the conditions given in the statement of Theorem 1.1 are in fact necessary.\qed

\section{Variations on the main theorem} 

In the statement of Theorem 1.1 it is assumed that $k$ is defined over an uncountable algebraically closed field. However once topological generation has been established over uncountable fields, more general results can be recovered under weaker hypotheses on $k$. 

\begin{lemma} 
Let $C_1,\ldots,C_e$ be noncentral conjugacy classes of a simply connected simple algebraic group $G$ defined over a field $k$ of characteristic zero. If there is a tuple $\omega\in\Omega$ topologically generating $G$, then an open dense subset of tuples in $\Omega$ topologically generates $G$.
\end{lemma} 

\begin{proof} Let $\Gamma$ be the set of tuples in $\Omega$ that generate a Zariski dense subgroup of $G$. Note that $\Gamma$ is defined over $k$. In characteristic zero, the assumption that $k$ is an algebraically closed field is not required in Lemma 2.6 and Remark 2.7 (see Theorem 4.1 of \cite{breuillard}). So $\Gamma$ is a nonempty open subset of $\Omega$.
\end{proof}

\begin{lemma} 
Let $C_1,\ldots,C_e$ be noncentral conjugacy classes of a simply connected simple algebraic group $G$, where $k$ is an algebraically closed field of characteristic $p>0$ that is not algebraic over a finite field.  If there is a tuple $\omega\in\Omega$ topologically generating $G$, then a dense subset of tuples in $\Omega$ topologically generate $G$. 
\end{lemma} 
\begin{proof} 
This follows from Theorem 2 of   \cite{burness}. 
\end{proof} 

\begin{corollary}
Let $C_1,\ldots,C_e$ be a collection of noncentral conjugacy classes of $SL_n(k)$, $n\geq3$, where $k$ is a field.  Assume $\sum_{i=1}^e\gamma_i\leq n(e-1)$, and it is not the case that $e=2$ and $C_1,C_2$ are quadratic. 
 \begin{itemize} 
 \item[$(i)$] If the characteristic  of $k$ is zero, then an open dense subset of tuples in $\Omega$ topologically generate $G$. 

 \item[$(ii)$] If $k$ is an algebraically closed field of positive characteristic  that is not algebraic over a finite field, then a dense subset of tuples in $\Omega$ topologically generate $G$. 
 \end{itemize} 

\end{corollary} 

\begin{proof} 
Theorem 1.1 and Lemma 4.1 imply $(i)$. Statement $(ii)$ follows from Theorem 1.1 and Lemma 4.2. 
\end{proof} 
\
\newline
We may now establish Theorem 1.2. 
\newline 
\newline 
\emph{Proof of Theorem 1.2} 
\newline
\newline
Corollary 4.3 establishes parts $(ii)$ and $(iii)$ of Theorem 1.2. Taken together, Lemma 2.10 and Theorem 1.1 show that part $(i)$ holds.\qed
\newline 

Finally, let us turn to the case $G = SL_2(k)$, which was excluded in Theorem 1.1. It is straightforward to prove an analogous result, although the statement differs slightly. Note if $C$ is a noncentral conjugacy class in $SL_2(k)$, then $\gamma=1$ and $C$ is quadratic. In particular it is always true that $\sum_{i=1}^e\gamma_i\leq n(e-1)$. Hence condition $(i)$ of Theorem 1.1 may be omitted,  and condition $(ii)$ becomes a statement about classes of involutions modulo the center.   

\begin{theorem} 
Let $C_1, C_2$ be noncentral conjugacy classes of  $G=SL_2(k)$, where $k$ is an uncountable algebraically closed field. There exists a tuple $\omega\in\Omega$ topologically generating $G$ if and only if it is not the case that $C_1$ and $C_2$ are classes of involutions modulo the center. 
\end{theorem} 

\begin{proof} Again it suffices to show  $\dim  \Omega> \dim  \Delta + \dim  G/M$, where $M$ is any subfield subgroup, or closed maximal positive-dimensional subgroup of $G$. First assume $M$ is a subfield subgroup $G(q)$ of $G$. Any noncentral conjugacy class $C$ of $SL_2(k)$ is two-dimensional. As $\dim\  (C\cap M)=0$ and $ \dim  G/M=3$, it follows that $\dim  \Omega= 4 > \dim  \Delta + \dim  G/M$. 

According to Theorem 3.2, each maximal closed positive dimensional subgroup of $G$ is either a Borel subgroup or the normalizer of a maximal torus. To start, assume $M$ is a Borel subgroup. If $C_1,C_2$ are semisimple, then elements in $C_j\cap M$ have a one dimensional centralizer, and  $\dim  \Omega> \dim \Delta + \dim  G/M$. 

A conjugacy class in $SL_2(k)$ is either semisimple or a scalar multiple of a unipotent class. By the above, if $C_1$ and $C_2$ are semisimple, then generic tuples in $\Omega$ are not contained in a Borel subgroup. So without loss of generality we may assume $C_1$ is unipotent (up to a scalar). Pick some $x_1\in C_1$. Since $C_1$ is a regular unipotent class, $x_1$ is contained in a unique Borel subgroup. Recall generic tuples in $\Omega$ have infinite order. Hence picking an element $x_2\in C_2$ of infinite order which does not live in this Borel subgroup, we have that $\langle x_1,x_2\rangle$ topologically generates $G$. Furthermore since $M$ is parabolic,

$$X=\bigcup_{g\in G} M^g\times M^g$$
is a closed subgroup of $G\times G$, so generic tuples in $\Omega$ topologically generate a group living in no conjugate of $M$.

Finally assume $M$ is the normalizer of a maximal torus. If $C$ is not a class of involutions modulo the center in $M$, then $\dim\  (C \cap M)=0$. Hence $\dim  \Omega> \dim  \Delta + \dim  G/M$, unless $C_1,C_2$ are both classes of involutions modulo the center. In this  latter case, $C_i\subset M^g$ for some $g\in G$. So $\Omega\subset X$, and no tuple in $\Omega$ will generate $G$ topologically. 
\end{proof}

The following generalization of Theorem 4.4 is the analogue of  Theorem 1.1 for $SL_2(k)$. 

\begin{theorem} 
Let $C_1,\ldots, C_e$ be noncentral conjugacy classes of $G=SL_2(k)$, where $k$ is an uncountable algebraically closed field. There exists a tuple $\omega\in\Omega$ topologically generating $G$ if and only if it is not the case that $e=2$ and $C_1$ and $C_2$ are classes of involutions modulo the center. \end{theorem} 
\begin{proof} Assume there is a conjugacy class $C$ in $C_1,\ldots,C_e$ that is not a class of involutions modulo the center. Without loss of generality assume $C\ne C_1$. By Theorem 4.4, there exists $x\in C$ and $x_1\in C_1$ such that $\langle x,x_1\rangle$ generates $G$ topologically. Furthermore, if $e=2$ and $C_1$ and $C_2$ are both classes of involutions modulo the center, then no tuple in $C_1\times C_2$ will  topologically generate $G$.
\end{proof} 
\section{Applications to generic stabilizers} 
 
We now turn our attention to  applications of Theorem 1.1, the first of which concerns linear representations of algebraic groups. In this section, let $G$ be a simple algebraic group defined over an algebraically closed field $k$,  and let $V$ be a finite dimensional rational $kG$-module. For any $x\in G$, let $$V^x=\{ v\in V \mid xv=v\}$$ be the vectors in $V$ fixed by $x$, and $$V(C)=\{ v\in V\mid xv =v \text{ for some } x\in C\}$$ be the vectors fixed by some element in the conjugacy class $C$. Finally let 

$$V^G=\{ v\in V \mid xv=v \text{ for all } x\in G\}$$
be the vectors in $V$ fixed by every element of $G$. 

Recall a generic stabilizer is trivial if there exists a non-empty open subvariety $V_0\subset V$ such that $G_v:=\{g\in G| \ gv=v\}$ is trivial for each $v\in V_0$. As discussed in Section 1, the cardinality of a generic stabilizer and in particular the question of when a generic stabilizer is trivial arises frequently in the context of invariant theory. 

 Let $G=SL_n(k)$. We will use Theorem 1.1 to  prove that a generic stabilizer is trivial when $\dim V$ is large enough. The relationship of this result to recent work of Garibaldi-Guralnick \cite{garibaldi} \cite{garibaldi2} \cite{garibaldi3} and Guralnick-Lawther \cite{guralnick2} \cite{guralnick2.5} is discussed in the first section.
 
 To begin, pick some conjugacy class $C$ of $G$, and assume $d$ elements of $C$ generate $G$ topologically. The first lemma yields a bound on the dimension of $V(C)$ in terms of $d$ and $\dim  C$.  
 
 \begin{lemma} 
 Let $G=SL_n(k)$, where $n\geq3$ and $k$ is an uncountable algebraically closed field. Let $V$ be a finite dimensional rational $kG$-module. Finally, let $C$ be a conjugacy class of $G$, and assume $d$ elements of $C$ generate $G$ topologically. Then $\dim V(C)\leq \frac{d-1}{d}\cdot \dim  V + \dim C$. In particular, $\dim  V(C)< \dim  V$, when $\dim  V> d\cdot \dim  C$.
 \ \end{lemma} 
 
 \begin{proof}
 Pick $x\in C$ and let $\varphi: G\times V^x\rightarrow V$ be the map $(g,v)\mapsto gv$. We claim $im(\varphi)=V(C)$. If $gv\in im(\varphi)$ then $xv=v$ and $gxv=gv$. In particular, $(gxg^{-1})gv=gxv=gv$. So $gv\in V(C)$. Conversely if $v\in V(C)$ there exists a $y\in C$ such that $y=gxg^{-1}$ and $yv=v$. Hence $g^{-1}v\in V^x$ and $\varphi(g, g^{-1}v)=v$, so $v\in im(\varphi)$. 
 
 To achieve the desired inequality, we compute a bound on the dimension of a fiber. Pick $v\in V(C)= im(\varphi)$. Then $xv=v$ for some $x\in C$. For any $h\in C_G(x)$,  we have $h^{-1}v=h^{-1}xv= xh^{-1}v$. So $h^{-1}v\in V^x$. Clearly $\varphi(h,h^{-1}v)=v$ for all $h\in C_G(x)$. Since $v\in im(\varphi)$ was arbitrary, every fiber has dimension at least $\dim \ C_G(x)$. In particular, 
 $$ \dim  V(C) \leq \dim  G + \dim  V^x- \dim  C_G(x).$$ 
 
 Now let $\gamma$ be the dimension of the largest eigenspace of a representative of $C$ acting on the natural module. Since $d$ elements of $C$ generate $G$ topologically, it follows from Theorem 1.1 that $d\gamma\leq (d-1)n$. This implies $\dim  V^x\leq \gamma\leq \frac{d-1}{d}\cdot  \dim  V$. Since dim $C= \dim  G- \dim  C_G(x)$, we have $$\dim  V(C)\leq \frac{d-1}{d}\cdot \dim  V + \dim  C.$$

 Finally, $\dim  V> d\cdot \dim  C$ if and only if $\dim  V > \frac{d-1}{d}\cdot \dim  V + \dim  C$, so $\dim  V(C)< \dim  V$ whenever $\dim  V> d\cdot\dim  C$.\end{proof}

Now let $\mathcal{P}$ be the set of noncentral conjugacy classes of $G$ containing elements of prime orders (and containing classes of arbitrary nontrivial unipotent elements in characteristic $p=0$). The following lemma from the literature allows us to conclude a generic stabilizer is trivial when $\dim   V(C)< \dim   V$ for all $C\in\mathcal{P}$. 

 \begin{lemma} 
Let $G$ be a simple algebraic group and $V$ be a finite dimensional rational $kG$-module such that $V^G=0$. Let $V(C)$ and $\mathcal{P}$ be as above. If $\dim   V(C)< \dim  V$ for all $C\in\mathcal{P}$, then a generic stabilizer is trivial. 
 \end{lemma}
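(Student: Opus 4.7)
My plan is to turn the dimension hypothesis into a statement about the locus where stabilizers are nontrivial, and then derive a contradiction using existence of a generic stabilizer. The core observation is that any nontrivial closed subgroup $H \leq G$ must contain an element lying in some $C \in \mathcal{C}$: if $H$ is finite, Cauchy's theorem supplies an element of prime order; if $H$ is positive-dimensional, then $H^\circ$ contains either a nontrivial torus (which has elements of every prime order) or a nontrivial unipotent subgroup (which yields an element of order $p$ in characteristic $p > 0$, and a nontrivial unipotent element in characteristic zero). In particular, if $G_v \neq 1$ then some conjugate of such an element fixes $v$, so $v \in V(C)$ for some $C \in \mathcal{C}$.

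With this in hand, I would first invoke a principal orbit type theorem for the action of $G$ on $V$. Since $G^\circ$ is reductive and $V$ is a linear $G$-variety, a result of Richardson (extended to positive characteristic by Luna and Slodowy) produces a nonempty open $G$-invariant subset $V_0 \subset V$ on which all stabilizers are $G$-conjugate to a fixed closed subgroup $H \leq G$. If $H$ is trivial the lemma follows immediately, so suppose for contradiction that $H \neq 1$.

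Applying the structural observation above, choose $x \in H$ belonging to some $C \in \mathcal{C}$. For each $v \in V_0$, write $G_v = g_v H g_v^{-1}$; then $g_v x g_v^{-1} \in G_v \cap C$, so $v \in V(C)$. This gives $V_0 \subset V(C)$ and hence $\text{dim } V = \text{dim } V_0 \leq \text{dim } V(C) < \text{dim } V$, a contradiction. Therefore $H = 1$, and a generic stabilizer is trivial.

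The main obstacle will be invoking the principal orbit type theorem at the required level of generality, namely for $G^\circ$ reductive acting on a linear $G$-variety $V$ in arbitrary characteristic; this is well-established but more delicate in positive characteristic than the classical characteristic-zero statement. The remaining pieces, identifying an element of $\mathcal{C}$ inside any nontrivial closed subgroup of $G$ and running the dimension count, are routine once the principal orbit decomposition is available.
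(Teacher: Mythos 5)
The paper itself disposes of this lemma by citation (Proposition 2.10 of Burness--Guralnick--Saxl \cite{base} together with Lemma 10.2 of Garibaldi--Guralnick \cite{garibaldi}), so you are supplying an argument the paper does not spell out. Your structural observation is correct and is exactly the right starting point: every nontrivial closed subgroup of $G$ meets some $C\in\mathcal{C}$ (Cauchy in the finite case; a nontrivial torus or a nontrivial unipotent subgroup of $H^{\circ}$ in the positive-dimensional case, with the unipotent classes included in $\mathcal{C}$ in characteristic zero precisely to cover the torsion-free situation). The dimension count at the end is also fine. The gap is the principal orbit type theorem. Richardson's theorem (and Luna's slice-theoretic refinements) is a characteristic-zero result; there is no general existence theorem for principal isotropy classes of reductive groups acting on linear $G$-varieties in positive characteristic, and establishing such existence case by case is a substantial part of what \cite{garibaldi} and \cite{gural6} actually do. Since the lemma is asserted in arbitrary characteristic, and since its conclusion is supposed to \emph{produce} a generic stabilizer (a trivial one), you cannot take the existence of a generic stabilizer as an input to the argument: in positive characteristic that input is unavailable, and the proof is circular in the one case where the lemma has content.

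The repair is to reverse the logic and argue directly on the locus of nontrivial stabilizers, which is essentially what the cited references do. Let $W=\{v\in V: G_v\neq 1\}$. By Chevalley's theorem $W$ is constructible, being the image in $V$ of the locally closed set $\{(g,v): gv=v,\ g\neq 1\}\subset G\times V$, and your structural observation gives $W=\bigcup_{C\in\mathcal{C}}V(C)$. The one remaining subtlety is that $\mathcal{C}$ is countably infinite (finitely many classes of order-$r$ elements for each prime $r$), so one cannot immediately conclude $\dim W<\dim V$ from $\dim V(C)<\dim V$; but if $W$ were dense it would, being constructible, contain a nonempty open subset of the irreducible variety $V$, and over an uncountable field a nonempty open subset cannot be covered by countably many subsets each lying in a proper closed subvariety (for a general field, base change to an uncountable extension as in the proof of Theorem 4.3). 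Hence $\overline{W}$ is a proper closed subvariety and $V\setminus\overline{W}$ is the required nonempty open set on which all stabilizers are trivial. With this rearrangement your argument becomes correct and self-contained; as written, it proves only the characteristic-zero case.
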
 
 
 \begin{proof} 
 This follows from Proposition 2.10 in \cite{burness2} and Lemma 10.2 in \cite{garibaldi4}.
 \end{proof} 
 
Finally, let $G=SL_n(k)$ and define
 $$\mathcal{P}^d:= \{C\in\mathcal{P}| \text{ \textit{d} elements of \textit{C} are required to generate $G$ topologically}\}$$
 $$\alpha_d:= \max \ \{\dim  C \ |\ C \in \mathcal{P}^d\}$$ $$\alpha:=\max \ \{ \alpha_d\cdot d\mid 2\leq d\leq n\}$$
 As noted in Section 1, it follows from Theorem 1.1 that if $n \geq 3$ and $C$ is a noncentral conjugacy class, then $G$ is topologically generated by $n$ elements in $C$.
 
Now assume $\dim  \ V>\alpha$. Then $\dim   V> d\cdot \dim   C$, for all $C\in\mathcal{P}$. Applying Lemma 5.1, dim $V(C)< \dim   V$ for every $C\in\mathcal{P}$, and hence by Lemma 5.2 a generic stabilizer is trivial. So to find an upper bound on the dimension of $V$ with a nontrivial generic stabilizer, it suffices to compute an upper bound on $\alpha$. 

\begin{theorem}
Let $G=SL_n(k)$, where $n\geq3$ and $k$ is an algebraically closed field. Let $V$ be a finite dimensional rational kG-module such that $V^G=0$. If $\dim  V > \frac{9}{4}n^2$, then a generic stabilizer is trivial.
 \end{theorem} 
 
  \begin{proof} To begin, let $k^\prime$ be an uncountable algebraically closed field extension of $k$. Note that if a generic stabilizer of $V(k)$ is nontrivial, then the set of points in $V(k)$ with a nontrivial stabilizer is dense. However $V(k)$ is dense in $V(k')$, so a generic stabilizer of $V(k^\prime)$ would then also be nontrivial. So without loss of generality we may assume $k$ is an  uncountable algebraically closed field. Furthermore, by the reasoning given prior to the statement of the theorem, it is sufficient to show $\alpha\leq\frac{9}{4}n^2$. 
  
  First we consider $\mathcal{P}^2$. Let $C$ be a regular semisimple conjugacy class in $G$. Then $C$ is not quadratic and by Theorem 1.1, $C\in\mathcal{P}^2$.  Regular semisimple classes are conjugacy classes of maximal dimension in $G$, so $\dim C=\alpha_2= n^2- n$. 
  
  For $d>2$, pick $C\in\mathcal{P}^d$ and $x\in C$.  By Theorem 1.1, $\gamma\geq \frac{d-2}{d-1}\cdot n$. It  follows that $C_G(x)$ must contain a copy of $GL_{\beta}(k)$ where $\beta = \lceil \frac{ n(d-2) } {d-1} \rceil $. It then follows from Proposition 2.9 in \cite{burness0} that $\alpha_d\leq n^2-\beta^2 -(n-\beta$), noting that $s = n-\beta$ in the notation of the reference. 

Finally, we compute an upper bound for $\alpha=\max\ \{ \alpha_d\cdot d\mid 2\leq d\leq n\}$. Note $\alpha_2= n^2-n$, and for $3\leq d\leq n$:
$$\alpha_d\leq n^2-\beta^2 -(n- \beta)< n^2-\beta^2\leq n^2-\dfrac{n^2(d-2)^2}{(d-1)^2}.$$
Furthermore,

$$d(n^2- \frac{n^2(d-2)^2}{(d-1)^2})= n^2\frac{d(2d-3)}{(d-1)^2}$$
and $\frac{d(2d-3)}{(d-1)^2}\leq \frac{9}{4}$ for $3\leq d\leq n$. So 
$$\alpha\leq\max  \ (2(n^2-n),n^2\frac{d(2d-3)}{(d-1)^2}) \leq \frac{9}{4}n^2.$$

\end{proof} 

Note that it is not claimed the above bound is sharp. However it is on the order of the best possible bound, as discussed in Section 1. For instance, there are examples where $G=SL_n(k)$, $V^G=0$, $\dim  V > n^2$ and
a generic stabilizer is nontrivial. 

\section{Applications to random generation}

We now turn to a final application of topological generation,  concerning the random generation of finite simple groups of Lie type. Let $H$ be a finite group and $I_t(H)$ be the elements of order $t$ in $H$. Assume $H$ contains elements of prime orders $r$ and $s$, and let
  $$P_{r,s}(H):=\frac{\mid \{ (x,y)\in I_r(H)\times I_s(H):\langle x,y\rangle =H\}\mid}{\mid I_r(H)\times I_s(H)\mid}$$ be the probability that two random elements of orders $r$ and $s$ generate $H$.  Now let $H_i$ be a sequence of finite simple groups of Lie type (of some fixed type) and assume $|H_i|\rightarrow\infty$ as $i\rightarrow\infty$. (Here the rank of the groups $H_i$ may vary, or the field they are defined over, or both). If $P_{r,s}(H_i)\rightarrow 1$ as $i\rightarrow\infty$, then groups of the relevant type are said to have \textit{random $(r,s)$-generation}. 
  
  The following result of Liebeck-Shalev \cite{liebeck2} shows that finite simple classical groups have random $(r,s)$-generation when the rank of the group is large enough (depending on the primes $r$ and $s$). In fact, explicit bounds on $f(r,s)$ are determined in \cite{stavrides}.

  \begin{theorem} 
 Let $(r,s)$ be primes with $(r,s)\ne (2,2)$. There exists a positive integer $f(r,s)$ such that if $H$ is a finite simple classical group of rank at least $f(r,s)$, then $$P_{r,s}(H)\rightarrow 1 \text{ as } |H|\rightarrow\infty.$$
\end{theorem}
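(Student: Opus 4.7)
The plan is to derive the theorem via the probabilistic framework of Liebeck-Shalev \cite{liebeck2}; indeed, this is essentially the result cited in the introduction, and the same method applies. First, for each finite simple classical group $H = H(q)$ of rank $n \geq f(r,s)$ containing elements of orders $r$ and $s$, I would fix conjugacy classes $C_r \subset I_r(H)$ and $C_s \subset I_s(H)$ chosen to have favorable generation properties --- in particular, classes whose eigenspace dimensions $\gamma_r, \gamma_s$ on the natural module satisfy $\gamma_r + \gamma_s \leq n$, so that the criterion of Theorem \ref{main-thm}(i) is met in the ambient algebraic group. Since $P_{r,s}(H)$ is an average of $P(C_r, C_s)$ over all such class pairs, it suffices to bound $1 - P(C_r, C_s)$ from above for this favorable choice.

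The standard inequality
$$1 - P(C_r, C_s) \; \leq \; \sum_{M} \frac{|C_r \cap M|}{|C_r|} \cdot \frac{|C_s \cap M|}{|C_s|} \cdot [H:M]$$
reduces the problem to summing fixed-point ratio contributions over representatives $M$ of the conjugacy classes of maximal subgroups of $H$. By Aschbacher's theorem (as used throughout Section~\ref{sec 2}), these partition into the geometric classes $\mathcal{C}_1, \ldots, \mathcal{C}_8$ and the almost-simple class $\mathcal{S}$. For each geometric class, known fixed-point ratio bounds (Liebeck-Saxl, Burness, and others) give estimates of the form $q^{-c n}$ for a positive constant $c$ depending on the class, provided the eigenspace configuration of $(r,s)$-elements is consistent with topological generation. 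Multiplying by $[H:M]$ and summing over a given class yields a contribution tending to zero as $|H| \to \infty$ once $n$ is large enough relative to $r,s$.

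For the class $\mathcal{S}$, I would invoke the theorem of Liebeck-Pyber-Shalev that the almost simple irreducible maximal subgroups of $H$ have order bounded polynomially in $q$, while $|H|$ grows as $q^{\Theta(n^2)}$; hence their contribution is negligible once the rank is sufficiently large. The threshold $f(r,s)$ is then chosen large enough that: (a) for every prime pair $(r,s) \neq (2,2)$, classes $C_r, C_s$ meeting condition (i) of Theorem \ref{main-thm} exist in $H$; and (b) the summed fixed-point-ratio estimates from all Aschbacher classes tend uniformly to zero.

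The main obstacle will be uniformly controlling the class $\mathcal{S}$, whose members lack a geometric parameterization on the natural module; this is exactly the source of the dependence of the threshold $f(r,s)$ on $r,s$, since small-rank exceptional embeddings of almost simple groups containing $(r,s)$-generated subgroups must be excluded case by case. A secondary subtlety is the case when $r$ or $s$ coincides with the defining characteristic $p$, where the relevant fixed-point ratios must be computed using unipotent conjugacy class data rather than semisimple character values, and the optimal choice of $C_r$ or $C_s$ (e.g.\ a long-root or transvection class) must be made with care to preserve the eigenspace bound required by Theorem \ref{main-thm}.
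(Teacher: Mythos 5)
The first thing to note is that the paper does not prove this statement at all: its entire ``proof'' is the single line ``This is the main theorem of Liebeck--Shalev \cite{liebeck2}.'' Theorem 5.1 is quoted as background (it is the large-rank result that the paper's own Corollary \ref{finite} is designed to complement in the fixed-rank, $q\to\infty$ regime), so there is no in-paper argument to compare yours against. Your sketch is instead a reconstruction of the Liebeck--Shalev proof, and its skeleton --- the inequality $1-P \leq \sum_M \mathrm{fpr}(x,H/M)\,\mathrm{fpr}(y,H/M)\,[H:M]$, summation over Aschbacher classes, and a separate order bound for the class $\mathcal{S}$ --- is indeed the right strategy and matches what is done in \cite{liebeck2}.

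That said, two points in your write-up are genuine problems. First, the reduction ``$P_{r,s}(H)$ is an average of $P(C_r,C_s)$, so it suffices to bound $1-P(C_r,C_s)$ for one favorable pair'' is invalid as stated: a weighted average is close to $1$ only if every pair of classes carrying non-negligible weight in $I_r(H)\times I_s(H)$ generates with probability close to $1$. You must either bound $1-P(C_r,C_s)$ uniformly over all class pairs, or show that the non-generating (``bad'') pairs occupy a vanishing proportion of $I_r(H)\times I_s(H)$; this is exactly the good/bad class dichotomy the paper sets up later in Section 5 for its own result, and it cannot be skipped. Second, your appeal to Theorem \ref{main-thm} is misplaced for this statement. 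Theorem 5.1 concerns groups of rank at least $f(r,s)$, with $|H|\to\infty$ allowed to occur by letting the rank grow; topological generation in a fixed algebraic group $SL_n(k)$ (and Theorem \ref{sec3-key-prop}-type transfer results such as Theorem 5.2) only control the fixed-rank, $q\to\infty$ limit, and moreover Theorem \ref{main-thm} is stated only for $SL_n$ while Theorem 5.1 covers all classical types. The large-rank theorem is proved by direct, rank-uniform fixed-point-ratio estimates, not through the algebraic group, so condition (i) of Theorem \ref{main-thm} should not be load-bearing in your argument. With the class-pair reduction repaired and the role of Theorem \ref{main-thm} removed, your outline would essentially be the Liebeck--Shalev proof.
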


The methods used to prove Theorem 6.1 in \cite{liebeck2} are probabilistic in nature.
We will use different tools, namely Theorem 1.1 and some algebraic geometry to establish the random $(r,s)$-generation 
of linear and unitary groups, where the rank is fixed and $r,s$ are independent of the rank. This gives a strengthening of Theorem 6.1 in these cases.

Let $G$ be a simply connected simple algebraic group defined over the algebraic closure $k$ of a prime field. Let $F$ or $F_q :G\rightarrow G$ be a Steinberg endomorphism, $G^F$ or $G(q)$ be the fixed points of $F$ on $G$, and $E(q):= E\cap G(q)$ for any subset $E$ of $G$. In particular, if $G=SL_n(k)$ we have $G(q)=SL_n(q)$ or $SU_n(q)$.  Finally, if $C_1, C_2$ are conjugacy classes of $G$ such that $\Omega(q):= C_1(q)\times C_2(q)\ne\emptyset$, let $P_{\Omega(q)}(G(q))$ be the probability that a random pair in $\Omega(q)$ generates $G(q)$. 

By Lemma 2.2 of \cite{guralnick3}, there exists a finite collection $\mathcal{U}=\{V_1,\ldots,V_d\}$ of finite-dimensional irreducible rational $kG$-modules such that every proper closed subgroup of $G$ that acts irreducibly on each $V_i\in\mathcal{U}$ is conjugate to some subfield subgroup $G(q)$ of $G$. 

In particular, for some fixed subfield subgroup $G(q_0)$ of $G$, there is a set $W_{q_0}$ such that both of the following hold: 

\begin{itemize}
    \item[$(i)$] $W_{q_0}$ consists of the tuples $(x_1,\ldots,x_e)\in G^e$ that act irreducibly on each module $V_i\in\mathcal{U}$.  
    \item[$(ii)$] Each tuple in $W_{q_0}$ generates either a Zariski dense subgroup of $G$, or a conjugate of some subfield subgroup $G(q)$ of $G$, with $G(q)\geq G(q_0) $. 
\end{itemize}

By Lemma 2.4 of \cite{guralnick3}, $W_{q_0}$ is defined over $\mathbb{F}_p$, and forms an open dense subset of $G^e$. Counting   $\mathbb{F}_q$-points on $\Omega\cap W_{q_0}$, the following theorem relates the existence of a tuple $\omega\in\Omega$ topologically generating $G$ to random tuples in $\Omega(q)$ generating the corresponding finite group of Lie type $G(q)$. 

\begin{theorem} 
 Let $G$ be a simply connected simple algebraic group defined over an algebraically closed field $k$ of  characteristic $p>0$. Let $C_1$ and $C_2$ be conjugacy classes of $G$, $\Omega=C_1\times C_2$,  and $Q$ be the set of powers $q=p^a$ such that $\Omega(q)=:C_1(q)\times C_2(q)\ne\emptyset$. The following are equivalent:
 \begin{itemize} 
\item[$(a)$]  If $k$ is uncountable, there exists a tuple $\omega\in \Omega$ topologically generating $G$. 

\item[$(b)$] If $k$ is uncountable, a generic subset of tuples in $\Omega$ topologically generate $G$.

\item[$(c)$] If $k$ is not algebraic over a finite field, there exists a tuple $\omega\in\Omega$ topologically generating $G$.

\item[$(d)$]  If $k$ is not algebraic over a finite field, a dense subset of tuples in $\Omega$ topologically generate $G$.

\item[$(e)$]  If $q\in Q$ is sufficiently large, then there exists a tuple $(x_1,x_2)\in\Omega(q)$ such that
$\langle x_1, x_2 \rangle= G(q)$.
 \item[$(f)$] $\lim_{q\in Q, q\rightarrow\infty} P_{\Omega(q)}( G(q))=1$.
 \end{itemize} 
\end{theorem}

\begin{proof} 
The equivalence of $(a)-(d)$ follows from Lemmas 2.10, 4.1 and 4.2 above. The equivalence of $(e)$ and $(f)$ is Corollary 5 in \cite{guralnick3}.  Hence it suffices to show $(a)$ is equivalent to $(e)$. 

To see that $(e)$ implies $(a)$, assume $k$ is an uncountable algebraically closed field. For any word $w(x,y)$, define a trace map

$$tr_{w}:C_1\times C_2 \rightarrow k, \ \  (x_1,x_2)\mapsto tr(w(x_1,x_2))$$
\newline 
where the natural representation is taken for classical groups and the adjoint representation is considered for exceptional groups. 

The proof of Theorem 3.1 in \cite{guralnick3} shows if there is a tuple $(x_1,x_2)\in\Omega(q)$ such that $\langle x_1,x_2\rangle=G(q)$ for sufficiently large $q$, there is a fixed word $w(x,y)$ such that $tr_w(x,y)$ is non-constant on $C_1\times C_2$. Furthermore by the proof of Corollary 2.8 $(ii)$ in \cite{guralnick3}, the trace map is a morphism mapping into a 1-dimensional variety $k$, so its image contains an open subset of its closure. Hence over any algebraically closed field $k$, its image attains all but finitely many values of $k$. In particular, there is a tuple $(x_1,x_2)\in\Omega$ such that $tr_w(x_1,x_2)$ is not in the algebraic closure of $\mathbb{F}_p$, and hence  $w(x_1,x_2)$ has infinite order. It then follows from Lemma 2.9 that generic tuples in $\Omega$ generate a group of infinite order. Since for some sufficiently large $q$ there is a tuple $(x_1,x_2)\in\Omega(q)$ such that $\langle x_1,x_2\rangle=G(q)$, $W_{q_0} \cap\Omega$ is a nonempty open subset of $\Omega$. The intersection of generic and open subsets is nonempty, so some tuple $\omega\in\Omega$ topologically generates $G$. 

Finally for $(a)$ implies $(e)$, assume there is a tuple $\omega\in\Omega$ topologically generating $G$, and $\dim   \Omega=s$. Then $W_{q_0}\cap \Omega\ne \emptyset$, and hence forms an open dense subset of $\Omega$. It follows $\Omega\backslash W_{q_0}$  is a proper closed subvariety of $\Omega$. Applying the Lang-Weil theorem \cite{lang} there is a positive constant $c$ such that for all sufficiently large $q\in Q$, $\mid\Omega(q)\mid> cq^s$. Hence there is a positive constant $c_1$ such that for sufficiently large $q$, $\mid \Omega(q)\backslash W_{q_0} \mid\leq c_1 q^{s-1}$. In particular for sufficiently large $q$, almost all tuples of $\Omega(q)$ are contained in $\Omega(q)\cap  W_{q_0}$. 

Next we count the proportion of pairs in $C_1(q)\times C_2(q)$ which are contained in some conjugate of $G(q_1)\times G(q_1)$, where $G(q_1)$ is a proper subfield subgroup of $G(q)$. By Lemma 2.5 of \cite{guralnick3},  $s= \dim   \Omega =\dim   G+ e$ for some $e\geq 1$. Now assume $q=p^a$,  $q_1=q^{\frac{1}{b}}$, and $d = \dim   G$. There exist positive constants $c_2$ and $c_3$
such that for large enough $q$, $\mid \Omega(q_1)\mid \leq c_2 q^{\frac{s}{b}}$, and  $\mid G(q):G(q_1)\mid \leq c_3 q^{d (1-\frac{1}{b})}$. Hence there is a positive constant $c_4$ such that for sufficiently large $q$, the proportion of pairs in $C_1(q)\times C_2(q)$ which are contained in some conjugate of $G(q_1)\times G(q_1)$ is bounded by 

$$\frac{\mid \Omega(q_1)\mid \mid G(q):G(q_1)\mid }{\mid \Omega(q)\mid}\leq c_4\big( \frac{q^{\frac{s}{b}} \cdot q^{d(1-\frac{1}{b})}}{q^s}\big)  =  c_4 q^{e(\frac{1}{b}-1)} $$
Summing over all positive divisors $b$ of $a$, we find the probability that a tuple in $\Omega(q)$ is contained in a proper subfield subgroup of $G(q)$ is at most  $c_4\sum_{b\mid a} q^{e(\frac{1}{b}-1)}$ and hence is bounded above by $O(q^{-\frac{e}{2}})$. So for any sufficiently large $q$, there is a tuple $(x_1,x_2)\in\Omega(q)\cap W_{q_0}$ not contained in any proper subfield subgroup $G(q_1)$ of $G(q)$. As $\langle x_1,x_2\rangle\subset G(q)$ and tuples in $W_{q_0}$ generate a group containing a subfield subgroup of $G$, it follows that $\langle x_1,x_2\rangle= G(q)$.
 \end{proof}

\begin{remark} Note while the results of Liebeck-Shalev \cite{liebeck2} are stated for finite simple classical groups, $G^F$ itself is not typically simple. This small discrepancy is easily resolved. If $G$ is a simply connected simple algebraic group, then $S=G^F/Z(G^F)$ is (almost always) a finite simple group. More generally, $[G^F,G^F]/Z([G^F,G^F])$ is simple except for a very small number of cases. Letting $\varphi: G^F\rightarrow S$ be the natural projection map, the center of $G^F$ is contained in the Frattini subgroup of $G^F$, and hence tuples $(x_1,x_2)\in\Omega(q)$  generate $G^F$ if and only if the corresponding pair $(\overline{x_1},\overline{x_2})$ generates $S$. (Of course, the orders of the elements may be different in $S$). 

An element $\overline{x}\in S$ has prime order $r$ if and only if the corresponding element $x\in G^F$ has prime power order $q=r^a$ for some $a\geq 1$. In particular, elements $\overline{x_1},\overline{x_2}\in S$ with prime orders $t$ and $u$ generate $S$ if and only if the corresponding elements $x_1$ and $x_2$ with prime power orders $r=t^a$ and $s=u^b$ generate $G$. Hence to prove random $(t,u)$-generation for primes $t$ and $u$ in the simple group $S$, it suffices to prove random $(r,s)$-generation in $G^F$ for prime powers $r$ and $s$. 
\end{remark} 
 \ 
\newline
\indent We now give the condition that will be used to establish the desired random $(r,s)$-generation result. To begin, fix prime powers $r$ and $s$, and let $G$ be a simply connected simple algebraic group. We say the pair of conjugacy classes $C'$ and $D'$ in $G$ are $\textit{bad}$ if the following hold:
\begin{itemize}
\item[$(i)$] $C'$ and $D'$ are classes of elements of orders $r$ and $s$ in $G$ and,
\item[$(ii)$] $\Omega'(q)= C'(q)\times D'(q)\ne\emptyset$ and, 
\item[$(iii)$] there is no tuple $\omega'\in\Omega'$ topologically generating $G$.
\end{itemize}

Similarly we say the pair of conjugacy classes $C$ and $D$ in $G$ are  \textit{good} if the following hold: 

\begin{itemize}
\item[$(i)$] $C$ and $D$ are classes of elements of orders $r$ and $s$ in $G$ and,
\item[$(ii)$]$\Omega(q)=C(q)\times D(q)\ne\emptyset$ and, 
 \item[$(iii)$] there is a tuple $\omega\in\Omega$ topologically generating $G$ and, 
  \item[$(iv)$] for any bad classes $C'$ and $D'$ of $G$, $\dim   \Omega'<\dim   \Omega$. 
 \end{itemize}
 
 \begin{lemma} Let $G$ be a simply connected simple algebraic group defined over an uncountable algebraically closed field of  characteristic $p> 0$. Let $Q$ be the set of powers $q=p^a$ such that $G(q)$ contains elements of prime power orders $r$ and $s$. If good classes exist for each $q\in Q$, then  $$\lim_{q\in Q, q\rightarrow\infty} P_{r,s}(G(q))=1.$$ 
\end{lemma} 
\begin{proof}
Fix $q\in Q$, and let $C_{1}, C_{2}$ be good classes corresponding to $q$. Next let $A_1,\ldots, A_l$ be all conjugacy classes of order $r$ elements in $G$ such that $A_i(q)\neq\emptyset$, and  $A=\bigcup_{i=1}^l A_i$. Similarly, let 
$B_1,\ldots, B_m$ be all conjugacy classes of order $s$ elements in $G$ such that $B_i(q)\neq\emptyset$, and  $B=\bigcup_{i=1}^m B_i$. Finally let $X= A\times B$, and $\Omega= C_1\times C_2$. 

Since $C_{1}$ and $C_{2}$ are good classes, $\dim   \Omega= \dim   X$. Hence there is a positive constant $c$ such that for sufficiently large $q$, $|X(q)| <c  |\Omega(q)|$. It follows that

$$ \lim_{q\in Q, q\rightarrow\infty}P_{\Omega(q)}(G(q))= \lim_{q\in Q, q\rightarrow\infty} P_{X(q)}(G(q))$$
where $P_{X(q)}(G(q))$ and $P_{\Omega(q)}(G(q))$ are the probabilities that  random tuples in $X(q)$ and $\Omega(q)$ generate $G(q)$. 

Now for each $q_i\in Q$, define $\Omega_i$ and $X_i$ as above by choosing appropriate good classes. Let $Q_i$ be the set of powers $q=p^a$ such that $\Omega_{i}(q)\ne\emptyset$. Theorem 6.2 implies $$\lim_{q\in Q_i, q\rightarrow\infty} P_{\Omega_{i}(q)}(G(q))=\lim_{q\in Q_i, q\rightarrow\infty} P_{X_{i}(q)}(G(q))= 1.$$
Since this holds for each $i$, it follows that   
$$\lim_{q\in Q,  i,q\rightarrow\infty} P_{X_i(q)}(G(q))= \lim_{q\in Q,  q\rightarrow\infty} P_{r,s}(G(q))=1.$$ 
\end{proof} 

Combined with Lemma 6.4, the following result completes the proof of Theorem 1.4.

 \begin{theorem} 
 Let $G=SL_n(k)$, $n\geq3$,  where $k$ is an uncountable algebraically closed field of positive characteristic. Assume $(r,s)\ne(2,2)$ are prime powers, and $F_q:G\rightarrow G$ is a Steinberg endomorphism such that $G(q)$ contains elements of orders $r$ and $s$. Then $G$ contains a pair of good classes of elements of orders $r$ and $s$.  
 \end{theorem}

 \begin{proof}
 First assume $G(q)=SL_n(q)$ and recall that
 $$|G(q)|= q^{\frac{n(n-1)}{2}}(q^2-1)(q^3-1)\cdots (q^n-1).$$
 Without loss of generality, assume $r> 2$. Pick any maximal dimensional conjugacy class $C$ of order $r$ elements in $G$, with $C(q)\neq\emptyset$. Note that $C$ is not quadratic. Let $\gamma_1$ be the dimension of the largest eigenspace of $C$. We claim $\gamma_1<\frac{n}{2}$. 
 
 If $r\mid q$ (so that the classes under consideration are unipotent) or $r\mid q-1$, then all classes of order $r$ elements are defined in $G(q)$, and $\gamma_1\leq \lceil \frac{n}{3} \rceil$. So assume $r\nmid q, q-1$, and pick $x\in C(q)$. Consider the action of $x$ on the natural module $V$ and define  
 $$l= \min \ \{j:\ r\mid q^j-1 \text{ and } 1\leq j\leq n\},$$ 
 $$m= \max \ \{j:\ r\mid q^j-1 \text{ and } 1\leq j\leq n\}.$$ 
 Note $x$ is semisimple, acts reducibly on a space of dimension at least $n-m$ with $n-m<\frac{n}{2}$ (since $r$ is a prime power, any element of order $r$ in $G(q)$ has a fixed space of dimension at least $n-m$), and has at most $m/l$ irreducible composition factors of dimension $l$, with $1< l\leq n$. A maximal dimensional
 conjugacy class of order $r$ elements in $G$ such that $C(q)\ne\emptyset$ does not have $m/2$ irreducible composition factors of dimension two. It follows that $\gamma_1\leq \max (n-m, \frac{n}{2}-1)< \frac{n}{2}$, and $C$ is not quadratic. 

Next let $D$ be any maximal dimensional class of order $s$ elements in $G$ such that $D(q)\neq\emptyset$. Let $\gamma_2$ be the dimension of the largest eigenspace of a representative in $D$. If $s>2$, then by the above we immediately have $\gamma_1+\gamma_2\leq n $. If $s=2$, and $D$ is a maximal dimensional conjugacy class of involutions, then $\gamma_2= \lceil \frac{n}{2} \rceil$. Again we have $\gamma_1+\gamma_2\leq n $. In both cases either $C$ or $D$ is not quadratic, and hence it follows from Theorem 1.1 that there is a tuple $\omega\in\Omega$ topologically generating $G$.

 So for any prime powers $(r,s)\ne (2,2)$ and any maximal dimensional conjugacy classes $C$ and $D$ of $G$ containing elements of orders $r$ and $s$, respectively,  with the property  $\Omega(q)\neq\emptyset$ for $\Omega=C\times D$, there is a tuple $\omega\in\Omega$ topologically generating $G$. Hence for each Steinberg endomorphism $F_q:G\rightarrow G$ defining $G(q)=SL_n(q)$, good classes of prime powers $(r,s)\ne (2,2)$ exist. 
 
 Finally, assume $F_q:G\rightarrow G$ is a Steinberg automorphism such that $G(q)=SU_n(q)$. Then 
 $$|G(q)|=q^{\frac{n(n-1)}{2}}(q^2-1)(q^3+1)\cdots(q^n-(-1)^n).$$ 
  However, if $r\mid q^l\pm 1$ and $r\nmid q^{j}\pm 1$ for $l<j\leq n$, then $l\leq \frac{n}{2}-1$. We may repeat the reasoning above to show good classes $C,D$ exist for prime powers $(r,s)\ne (2,2)$. 
 \end{proof} 
 \begin{remark}
 Note if $G=SL_2(k)$, then by Theorem 4.5 bad classes do not exist unless $C_1$ and $C_2$ are both classes of involutions modulo the center. Hence it follows from Theorem 6.5 and Remark 6.3 that $PSL_2(q)$ has random $(r,s)$-generation for primes $(r,s)\ne (2,2)$. 
 \end{remark}

\setcounter{tocdepth}{1}


\begin{thebibliography}{99}




 \bibitem{aschbacher} M. Aschbacher, \textit{On the maximal subgroups of the finite classical groups},  Invent Math (1984), 76: 469-514.
 


 \bibitem{breuillard} 
 E. Breuillard, B. Green, R. Guralnick, and T. Tao, \textit{Strongly dense free subgroups of semisimple algebraic groups}, Isr. J. Math 192 (2012), 347-379.
 
 \bibitem{brosnan} P. Brosnan, Z. Reichstein, and A. Vistoli, \textit{Essential dimension, spinor groups, and
quadratic forms}, Ann. of Math. (2) 171 (2010), 533-544.

\bibitem{burness0} T. Burness, \textit{Fixed point spaces in actions of classical algebraic groups}, J. Group Theory 7 (2004), no. 3, 311-346. 

 
 \bibitem{burness} 
 T. Burness, S. Gerhardt and R. Guralnick \textit{Topological generation of exceptional algebraic groups}, Adv. Math. 369 (2020).  
 
 \bibitem{burness1} 
 T. Burness, S. Gerhardt and R. Guralnick \textit{Topological generation of classical algebraic groups}, in preparation.
 
  \bibitem{burness2} 
 T. Burness, R. Guralnick and J. Saxl, \textit{On base sizes for algebraic groups}, Journal of the European Mathematical Society, 19 (2013), 2269-2341. 
 


\bibitem{chiodo} M. Chiodo, \textit{Finitely annihilated groups}, Bull. Austral. Math. Soc. 90, (2014), 404-417.

 \bibitem{boevey}
W. Crawley-Boevey, \textit{Indecomposable parabolic bundles and the existence
of matrices in prescribed conjugacy class closures with product equal to the identity},
Publications math{\'e}matiques de l'IHES, 100 (2004), 171-207.  

\bibitem{boevey2} W. Crawley-Boevey, \textit{Quiver algebras, weighted projective lines, and the Deligne-Simpson problem}, International Congress of Mathematicians. Vol. II, Eur. Math. Soc., Zurich, (2006), 117-129.  

\bibitem{shaw}
W. Crawley-Boevey and P. Shaw, \textit{Multiplicative preprojective algebras, middle convolution and the Deligne-Simpson problem}, Adv. Math. 201 (2006), 180-208.
   
\bibitem{dixon} 
 J. D. Dixon, \textit{The probability of generating the symmetric group}, Math. Z. 110 (1969), 199-205.
 
 \bibitem{eisenmann}
 A. Eisenmann and N. Monod, \textit{Normal generation of locally compact groups}. Bull. Lond. Math. Soc. 45 (2013), no. 4, 734-738. 
 
 \bibitem{elasvili} A.G. \'Ela\v svili, \textit{Canonical  form  and  stationary  subalgebras  of  points  in  general  position  for  simple linear Lie groups}
(Russian), Functional. Anal. i Prilozen. 6 (1972), 51-62.

\bibitem{elasvili2} A.G. \'Ela\v svili, \textit{Stationary subalgebras  of points of general  position for irreducible linear Lie groups} (Russian), Functional. Anal. i Prilozen. 6 (1972), 65-78.

\bibitem{garibaldi} S. Garibaldi and R. Guralnick, \textit{Generically free representations I: large representations}, to appear in Algebra Number Theory.

\bibitem{garibaldi2} S. Garibaldi and R. Guralnick, \textit{Generically free representations II: irreducible representations}, to appear in Transform. Groups.

 \bibitem{garibaldi3} S. Garibaldi and R. Guralnick, \textit{Generically free representations III: exceptionally bad characteristic}, to appear in Transform. Groups.
 
 \bibitem{garibaldi4} S. Garibaldi and R. Guralnick,  \textit{Simple groups stabilizing polynomials}, Forum of Mathematics, Pi, 3, E3, 2015. 

\bibitem{garibaldi5} S. Garibaldi and R.M. Guralnick, \textit{Spinors and essential dimension}, Compositio Math.
153 (2017), 535-556, with an appendix by A. Premet.


\bibitem{gelander3} T. Gelander, \textit{Convergence groups are not invariably generated},
Int. Math. Res. Not. (2015), 
9806-9814.

\bibitem{gelander2} T. Gelander, G. Golan and K. Juschenko, \textit{Invariable generation of Thompson groups}, J. Algebra 478, (2017), 261-270. 

\bibitem{gelander}  T. Gelander and C. Meiri
\textit{The congruence subgroup property does not imply invariable
generation}, Int. Math. Res. Not. (2017), 4625- 4638. 





\bibitem{grothendieck} A. Grothendieck, \textit{\'{E}lements de g\'{e}om\'{e}trie alg\'{e}brique (r\'{e}dig\'{e}s avec la collaboration de  J.
Dieudonn\'{e}): \'{E}tude  locale  des  sch\'{e}mas  et  des  morphismes  de  sch\'{e}mas},
III. Inst. Hautes Etudes Sci. Publ. Math. No. 28, 1966.

\bibitem{guralnick}
 R. Guralnick, \textit{Some applications of subgroup structure to probabilistic generation and covers of curves}, in Algebraic groups and their representations, Cambridge (1997), 301-320, NATO Adv. Sci. Inst. Ser. C Math. Phys. Sci., 517, Kluwer Acad. Publ., Dordrecht.
 
  \bibitem{guralnick2}
R. Guralnick and R. Lawther, \textit{Generic stabilizers in actions of simple algebraic groups I: modules and first Grassmannian varieties}, preprint,  	arXiv:1904.13375. 

\bibitem{guralnick2.5}
R. Guralnick and R. Lawther, \textit{Generic stabilizers in actions of simple algebraic groups II: higher Grassmannian varieties}, preprint,  arXiv:1904.13382. 
 

 

 
  \bibitem{guralnick3} 
 R. Guralnick, M.W. Liebeck, F. L\"{u}beck, and A. Shalev, \textit{Zero-one generation laws for finite simple groups}, to appear in Proceedings of the American Mathematical Society. 
 
\bibitem{guralnick5} 
R. Guralnick and J. Saxl \textit{Generation of finite almost simple groups by conjugates}, J. Algebra 268 (2003), 519-571. 

 \bibitem{guralnick4} 
 R. Guralnick and P. Tiep, \textit{Decompositions of small tensor powers and Larsen's conjecture}, Represent. Theory 9 (2005), 138-208.

\bibitem{hall} J. Hall, M. Liebeck, and  G. Seitz, \textit{Generators for finite simple groups, with applications to linear groups},  Q. J. Math 43 (1992), 441-458. 

\bibitem{humphreys}
J. Humphreys. \textit{Conjugacy classes in semisimple algebraic groups}, American Mathematical Society, Providence, 1995.

\bibitem{johnson} D. Johnson. \textit{Homomorphs of knot groups}, Proc. Amer. Math. Soc., vol. 78, no. 1, (1980), 135-138. 

\bibitem{kantor} 
 W.M. Kantor and A. Lubotzky. \textit{The probability of generating a finite classical group}, Geom. Dedicata 36 (1990), 67-87.
 
\bibitem{kantor2}  W.M. Kantor, A. Lubotzky and A. Shalev \textit{Invariable generation of infinite groups}, J. Algebra 421 (2015), 296-310. 
  
\bibitem{kantor3}  W. M. Kantor, A. Lubotzky and A. Shalev, \textit{Invariable generation and the Chebotarev invariant of a finite group},
J. Algebra
348
(2011), 302-314.

\bibitem{karpenko} 
N. Karpenko, \textit{Canonical dimension}, Proceedings of the International Congress of
Mathematicians 2010, World Scientific, 2010.

\bibitem{katz}
N. M. Katz, \textit{Rigid local systems}, Princeton University
Press, Princeton, NJ, 1996.

\bibitem{kim}  S. Kim, \textit{Normal generation of line bundles on multiple coverings}. J. Algebra 323 (2010), no. 9, 2337-2352. 

\bibitem{kostov1}
V. P. Kostov, \textit{On the existence of monodromy groups of
Fuchsian systems on Riemann's sphere with unipotent generators},
J. Dynam.\ Control Systems 2 (1996), 125-155.

\bibitem{kostov2}
V. P. Kostov, \textit{On the Deligne-Simpson problem}, C. R. Acad.\
Sci.\ Paris S\'er.\ I Math. 329 (1999), 657-662.

\bibitem{kostov3}
V. P. Kostov, \textit{On some aspects of the Deligne-Simpson problem},
J.\ Dynam.\ Control Systems 9 (2003), 393-436.

\bibitem{kostov4}
V. P. Kostov, \textit{The Deligne-Simpson problem - a survey}, J. Algebra
281 (2004), 83-108. 
 
\bibitem{kostov5} 
 V.P. Kostov, \textit{Some examples related to the Deligne-Simpson problem}, Second International Conference on
Geometry, Integrability and Quantization
June 7-15, 2000, Varna,  Bulgaria
Ivaïlo M. Mladenov and Gregory L. Naber,  Editors
Coral  Press, Sofia 2001.

\bibitem{lang} S. Lang and A. Weil, \textit{Number of points of varieties in finite fields}, Amer. J. Math.76 (1954), 819-827.

 \bibitem{lawther}
 R. Lawther, M.W. Liebeck and G.M. Seitz, \textit{Fixed point spaces in actions of exceptional algebraic groups}, Pacific J. Math. 205 (2002), 339-391.
 
 \bibitem{liebeck5} M.W. Liebeck and G.M. Seitz, \textit{On the subgroup structure of the classical groups}, Invent. Math. 134
(1998), 427-453.

\bibitem{liebeck6} M.W. Liebeck and G.M. Seitz, \textit{The maximal subgroups of positive dimension in exceptional algebraic groups}, Mem. Amer. Math. Soc. 802, 2004. 
 
 \bibitem{liebeck} 
 M.W. Liebeck and A. Shalev. \textit{The probability of generating a finite simple group}, Geom.  Dedicata 56 (1995), 103-113.
 
 \bibitem{liebeck2} 
 M.W. Liebeck and A. Shalev. \textit{Random (r,s)-generation of finite classical groups}, Bull. \ London Math. Soc. 34 (2002), 185-188.
 
 \bibitem{liebeck3}
 M.W. Liebeck and A. Shalev, \textit{Classical groups, probabilistic methods, and the $(2,3)$-generation
problem}, Ann. of Math 144 (1996), 77-125.



\bibitem{lot} R. L¨otscher, M. MacDonald, A. Meyer, and Z. Reichstein, \textit{Essential dimension of
algebraic tori}, J. Reine Angew. Math. 677 (2013), 1-13.

\bibitem{lubeck} F. L\"{u}beck, \textit{Small degree representations in defining characteristic}, LMS J. Comput. Math 56 (2001), 135-169.

\bibitem{lubeck2} F. L\"{u}beck and G. Malle, \textit{(2,3)-Generation of Exceptional Groups}, Journal of the London Mathematical Society, 59, (1999), 109–122.

\bibitem{malle} G. Malle and D. Testerman, \textit{Linear algebraic groups and finite groups of Lie type}, Cambridge University Press, Cambridge, 2011.


\bibitem{osin} D. Osin and A. Thom. \textit{Normal generation and 2-Betti numbers of groups}. Math. Ann. 355 (2013), 1331-1347. 

\bibitem{popov1} A.M. Popov, \textit{Finite stationary subgroups in general position of simple linear Lie groups} (Russian),Trudy Moskov. Mat. Obshch. 48 (1985), 7-59.

\bibitem{popov2} A.M. Popov, \textit{Finite isotropy subgroups in general position of irreducible semisimple linear Lie groups} (Russian), Trudy Moskov. Mat. Obshch.50 (1987), 209-248.

\bibitem{popov3} V.L. Popov and E.B. Vinberg, \textit{Invariant theory}, in Encyclopaedia of Mathematical Sciences series, vol. 55, Algebraic Geometry IV, Springer-Verlag, Berlin, 1994.

\bibitem{richardson} R. W. Richardson, \textit{Principal orbit types for algebraic transformation spaces in characteristic
zero}, Invent. Math. 16 (1972), 6-14.

\bibitem{simpson} C. T. Simpson, \textit{Products of Matrices}, Differential geometry,
global analysis, and topology (Halifax, NS, 1990), Canadian Math.
Soc. Conf. Proc. 12 (1992), Amer. Math. Soc.,
Providence, RI, 1991, pp. 157--185.

\bibitem{stavrides} 
M. Stavrides, \emph{On the random generation of finite simple classical groups}, Comm. Algebra 32 (2004), 4273--4283. 


\bibitem{thom} A. Thom.  \textit{A note on normal generation and generation of groups}. Commun. Math. 23 (2015), no. 1, 1-11.

\bibitem{wiegold1} J. Wiegold, \textit{Transitive groups with fixed-point-free permutations}, Arch. Math. (Basel) 27 (1976),
473-475. 

\bibitem{wiegold2} J. Wiegold, \textit{Transitive groups with fixed-point-free permutations II}, Arch. Math. (Basel) 29
(1977), 571-573.
\end{thebibliography}
\end{document}